\newtheorem{theorem}{Theorem}
\newtheorem*{theorem*}{Theorem}
\newtheorem{lemma}{Lemma}[section]
\newtheorem{proposition}[lemma]{Proposition}
\theoremstyle{definition}
\newtheorem{definition}[lemma]{Definition}
\theoremstyle{remark}
\newtheorem{remark}{Remark}[section]
\newcommand{\naturales}{\mathbb{N}}
\newcommand{\real} {\mathbb{R}}
\newcommand{\enteros} {\mathbb{Z}}
\newcommand{\R}{{\bf\sf R}}
\newcommand{\diam}{\mbox{diam}}
\newcommand{\Vol}{\mbox{Vol}\, }
\newcommand{\dH}{\dim_{\mathcal H}\,}
\newcommand{\dd}{\dim_{\mathcal D}\,}
\newcommand{\imply}{\Rightarrow}
\title{New bounds for the Hausdorff dimension \\ of a dynamically defined Cantor set}
\author{Fernando J. S\'anchez-Salas}
\address{Departamento de Matem\'aticas, Facultad Experimental de Ciencias, Universidad del Zulia, Avenida Universidad, Edificio Grano de Oro, Maracaibo, Venezuela}
\email{fjss@fec.luz.edu.ve}
\date{March 14, 2023}
\begin{document}
\begin{abstract}
 In this paper we use the additive thermodynamic formalism to obtain bounds of the Hausdorff and box-counting dimension of certain non conformal hyperbolic Cantor sets defined by piecewise smooth expanding maps on a $d$-dimensional smooth manifold $M$.
\end{abstract}

\maketitle

\section{Introduction}  

The calculation of dimensional characteristics as the Hausdorff and box-counting dimensions is a key topic of fractal geometry of sets and measures. Theses quantities are indicators of the size and complexity of fractal sets. See \cite{falconer} and \cite{mattila}. In this paper we study some aspects of the fractal geometry of non conformal hyperbolic Cantor sets left invariant by piecewise $C^r$, $r > 1$ differentiable expanding maps on a compact Riemannian manifold with dimension $d > 2$. Concretely we use the additive thermodynamic forma\-lism to obtain lower and upper bounds for the Hausdorff and box-counting dimension of dynamically defined Cantor sets. 
We refer to \cite{barreira.gelfert.2011} and \cite{pesin.review.2010} for an overview of problems and results on the subject of dimension theory of dynamical systems. The problem to find lower bounds for the Hausdorff dimension of invariant sets of certain dynamical systems it is interesting in its own and had been discussed previously for 
attractors of dissipative reaction-diffusion equations in \cite{efendiev.miranville.2003}, Rauzy gaskets \cite{gutierrez.2020}, Bernoulli convolutions \cite{kleptsyn.2021}, attractors of certain dynamical systems \cite{li.1995}, geometric Lorenz attractor in \cite{clizana.2008} and \cite{mora.2012} and $n$-dimensional self-affine sets \cite{paulsen.1995}. We will deal with dynamically defined Cantor sets defined as maximal $f$-invariant sets of a piecewise expanding maps in higher dimensional manifold. This are the simplest models of uniformly hyperbolic repellers. See section \ref{section.setting} below. Our approach will rely upon the volumetric method used in \cite{vanderlei.viana} to give upper bounds for the Hausdorff dimension of certain nonuniformly expanding repeller. See section \ref{section.methods}. Our new bounds corrects an error in our lower bound in Proposition 5.1 \cite{sanchez.salas.2002} pointed out in \cite{cao.wang.zhao.2023}. This proposition were used to prove that a Sinai-Bowen-Ruelle measure can be approximated by sequences of horseshoes with arbitrarily large unstable dimension, a result previously proved for surface diffeomorphisms in \cite{mendoza.1988}. The present correction solves an essential error in the proof of Proposition 5.1, however Main Theorem in \cite{sanchez.salas.2002} may require further modifications which may appear in a manuscript in preparation.

Besides this correction our present result points to further applications we have in mind. Namely, in
the work of Buzzi, Crovisier and Sarig \cite{buzzi.crovisier.sarig} on finiteness of equilibrium states for surface diffeomorphisms, dimension theory of dynamical systems is used to show that, if topological entropy is sufficiently large then suitable horseshoes associated with hyperbolic measures converging to an equilibrium state are sufficiently "thick" in such way that they are homoclinically related, that is, there exists a transversal intersection between their stable and unstable manifolds. In this argument it is crucial to relate the topological entropy to the thickness of the approximating horseshoes, relying upon the dimension theory of surface diffeomorphisms. Then we rise the following question: \textit{there exists, in the higher dimensional setting, another dynamical indicator, involving the topological entropy or other dynamically defined quantities, such that if this quantity is sufficiently large then the approximating horseshoes used on \cite{buzzi.crovisier.sarig} are thick enough and then homoclinically related, as in the two dimensional setting?} Our present results suggest that this question can be answered in the affirmative. Concretely, volumetric methods can be used to construct a suitable dynamical indicator, involving topological entropy, Lyapunov exponents, average rates of volume and length expansions along invariant manifolds which controls how large are the stable and unstable Hausdorff dimensions of non conformal uniformly hyperbolic horseshoes, pointing to a finiteness result of equilibrium states in the higher dimensional setting, using Buzzi, Crovisier and Sarig arguments. This is part of a work in progress which will appear elsewhere.

\section{The setting}\label{section.setting}
\subsection{Cantor sets as limits of regular constructions}\label{subsection.regular.constructions}
We are interested in the box-counting and Hausdorff dimensions of the limit set of certain geometrical constructions. Let us describe our basic model. Let $N \subset M$ be a compact domain with non empty interior. What we have in mind is basically a closed $d$-cube $[0,1]^d$ embedded in $M$ with piecewise smooth border $\partial{N}$, where $d = \dim(M)$. We also need finitely many one-to-one maps $g_i :  N \to N$, $i = 1, \cdots , s$ satisfying the following conditions:
\begin{itemize}
 \item the maps $g_i$, $i = 1, \cdots s$ are $C^r$ ($r > 1$) \textbf{contractive}: 
 $$
 \|Dg\| := \max_{1 \leq i \leq s}\sup_{x \in U}\|Dg_i(x)\| < 1;
 $$
 
\item \textbf{open condition}: there exists an open neighborhood $U \supset N$ such that $U_i = g_i(U) \subset int(N)$ are pairwise disjoint: $U_i \cap U_j = \emptyset$ for $i \neq j$; 

\item \textbf{border condition}: we will suppose in addition that 
$\partial{N_i} \cap \partial{N} = \emptyset$.
\end{itemize}

It is not usual to ask for a border condition as above, however we need it in some parts of our arguments. It is not too restrictive as long as we can approximate a generic geometric construction by a configuration satisfying this property. See below.

By a well known theorem due to Hutchinson, there exists a limit set
$$
\Lambda = \bigcap_{n = 1}^{+\infty}\Lambda_n \quad\text{where}\quad \bigcup_{(i_1, \dots , i_n) \in s^n}g_{i_1} \circ \cdots \circ g_{i_n}(N).
$$
The set $\Lambda_n$ is the $n$-th stage of the construction and $\Lambda$ the attractor set of the IFS ${\bf g} = \{g_i\}$ acting, as a set map, upon the (complete) metric space of compact subsets of $N$ with the Hausdorff distance, where ${\bf g}(A) = \bigcup_ig_i(A)$ is the set map and ${\bf g} \circ {\bf g}(A) = \bigcup_{i,j}g_i \circ g_j(A)$, the composition law. The collection $\wp_n  = \{g_{i_1} \circ \cdots \circ g_{i_n}(N): (i_1, \dots , i_n) \in s^n\}$ is also known as a \textit{Markov partition} of the $n$-stage of the construction, which will be called \textit{atoms of generation $n$}: $\Lambda_n = \bigcup_{P \in\wp_n}P$. The family of sets of first generation is the \textit{template} of the construction. Topologically, the attractor of a geometric construction $\Lambda$ satisfying the open and border condition is a Cantor set, that is, a compact, totally disconnected, perfect set. In particular $\Lambda$ has zero topological dimension. Furthermore, our method requires that there exists a well defined bounded relation between diameter and volume inside the set, in the following sense.

\begin{definition}
We say that a subset $X \subset M$ is \textit{volume reducible} if there exist some $k < d$ such that $X$ can be covered with \textit{at most countably many smooth subsets $B_i$ of positive $k$-volume}, where 
$\Vol_k(B_i) \asymp radius(B)^{k}$  uniformly. Here $d$ is the dimension of $M$, the ambient manifold. By smooth we mean that $B_i \subset M$ is an smoothly embedded $k$-disc. A subset is \textit{volume irreducible} if it is not volume reducible.
\end{definition}

A cartesian product $X \times Y$ of Cantor sets $X,Y \subset \real$ is irreducible since it can not be covered by countably many intervals. Similarly so a Cantor set $\Lambda \subset \real^d$ whose projections onto the every $k$ dimensional coordinate plane is a Cantor set. Moreover, if some of these projections have positive Lebesgue measure, the set is still volume irreducible, whenever there exits at least one Cantorian projection. A sufficient condition for a limit set $\Lambda$ to be volume reducible is that there exists an embedded smooth submanifold $Z \subset M$ of codimension $\geq 1$ such that 
$\Lambda \subset Z$. This situation appears in some examples of self-affine subsets, depending of the initial configuration of the template. Volume reducible configurations are exceptional in that this condition can be removed by an small perturbation of the initial template of the IFS. 

If the construction converges to a non rectifiable connected set, as in the case of Weierstrass-like graphs, the limit set it is not volume reducible in the sense of our definition, since it can not be covered by countably many \textit{smooth} $k$-discs, with $k < d$. However, falls outside of reach for our methods there is not a well defined notion of smooth $k$-dimensional volume for these sets. In some cases, we can approximate these graphs-like curves or connected sets by irreducible Cantor sets, as we will see below, when discussing Bedford-McMullen carpets example. Then, we will suppose, for the sake of simplicity in our arguments, that the limit of our constructions are volume irreducible Cantor sets in the $d$ dimensional ambient 
manifold $M$. 

\begin{definition}\label{definition.regular.constructions}
We say that a contractive Iterated Function System (IFS) $\{g_i\}$ satisfying the open and border condition, whose limit set is volume irreducible, is a \textit{regular geometric construction}.
\end{definition}

\subsection{The dimension theory of conformal sets}\label{subsection.conformal.sets}
Let
\begin{equation}\label{piecewise.map.definition}
f : \bigcup_iN_i \to N \subset M
\end{equation}
be a piecewise expanding $C^r$ ($r > 1$) map on a $d$-dimensional compact Rie\-mannian manifold with dimension $d \geq 1$, where $N \subset M$ is a compact $d$-cube and $N_i \subset$ are finitely many non overlapping $d$-cubes placed inside $N$ and $f|N_i : N_i \to N$ is an expanding diffeomorphism for each $i$ and
\begin{equation}\label{maximal.invariant.set}
\Lambda = \bigcap_{n=0}^{+\infty}f^{-n}\left(\bigcup_iN_i\right)
\end{equation}  
be its maximal invariant subset. If $d = 1$ then $f$ is simply a piecewise expanding interval map.  If $d > 1$ we say that $f$ is \textit{conformal} if there exists a non zero function $a = a(x)$ such that 
$Df(x) = a(x)id_{T_xM}$. If $f$ is an interval or conformal piecewise expanding map, the following results are well known:
\begin{enumerate}
 \item $\dH(\Lambda) = \dim_B(\Lambda)$;
 \item $\dH(\Lambda) = d\alpha$, where $0 < \alpha \leq 1$ is the unique solution to the Bowen equation 
 $P(f|\Lambda,-\alpha\log{Jf}) = 0$, where $Jf = |a(x)|^d$ is the Jacobian wrt the Riemannian volume;
 \item the Hausdorff measure is finite and positive $0 < \mathcal{H}_{d\alpha}(\Lambda) < +\infty$;
 \item there exists a unique Borel probability of maximal dimension $\mu_{\Lambda}$, such that:
 $$
 \dH(\Lambda) = \dfrac{h(\mu_{\Lambda})}{\chi^+(\mu_{\Lambda})} = \sup_{\mu \in \mathcal{M}_f(\Lambda)}\dfrac{h(\mu)}{\chi^+(\mu)},
 $$
 where 
 $$
 \chi^+(\mu) = \int_{\Lambda}\sum\limits_{\chi_i^+(x) > 0}\chi_i^+(x)d\mu(x) 
 $$
 is the sum of the positive Lyapunov exponents of the $f$-invariant probability $\mu$;
 \item the Hausdorff dimension $\dH(\Lambda_f)$ varies smoothly with $f$. See Appendix A and references therein.
\end{enumerate}

Conformal hyperbolic repellers as the ones described above are the simplest models where (additive) thermodynamic formalism can be used sucessfully to calculate geometrical dimensions in terms of the dynamical indicators. In our case these repellers are topologically (semi)conjugated to a full shift with finitely many symbols. More generally, hyperbolic repellers are (semi)conjugated to subshifts of finite type. And there are also important nonuniformly expanding examples, as the one studied in \cite{vanderlei.viana}.

\subsection{Statement of the problem}
The solution of the Bowen equation is the \textit{dynamical dimension}, that is, a dimensional characteristic of the dynamics. We are interested in to understand its connection with geometrical dimensions as the box-counting dimension and the Hausdorff dimension. We refer to subsection \ref{subsection.dimension.theory} in Appendix A to recall the definitions of the main fractal dimensions and subsection \ref{subsection.thermodynamics.dimension} to recall the connection between the thermodynamic formalism and dimension theory of hyperbolic repellers.

When we are dealing with interval maps the connection between geometrical and dynamical dimensions comes from the fact that volume and diameter of an interval are the same. A natural question arises as to \textit{why, in the higher dimensional setting, the dynamical and geometrical dimensions of conformals sets coincide}? The simplest explanation is that the atoms of the construction $P \in \wp_n$ at every stage of the construction have bounded \textit{geometrical distortion}. Indeed, let
$$
r_{inner}(P) = \sup\{r > 0 : \exists \ x \in P, \ B(x,r) \subset P\}
$$
$$
r_{outer}(P) = \inf\{r > 0 : \exists \ x \in P, \ P \subset B(x,r)\}
$$
be the inner and outer diameters of the set $P$, respectively. It can be proved that, if $f$ is conformal, then there exists a suitable constant $C > 1$ such that 
$$
C^{-1} \leq \dfrac{r_{inner}(P)}{r_{outer}(P)} \leq C, \quad \forall \ P \in \wp_n, \ n \geq 0.
$$ 
In other words, the geometrical distortion of the generating sets is uniformly bounded. Therefore, in these cases the dynamical measure $\mathcal{D}_{a}$, which is defined in terms of coverings by atoms $P \in \wp_n$ of $n$-th generatins, is equivalent to the Hausdorff measure $\mathcal{H}_{da}$, for every $a > 0$, defined by coverings by metric balls with (possibly different) radius $< \zeta$, also called $\zeta$-coverings (see Appendix A). Consequently, the dynamical and the Hausdorff dimension are equal, up the multiplicative constant $d$ having all the nice properties stated at the end of subsection \ref{subsection.conformal.sets}.

The geometrical distortion of the atoms of a non conformal construction is unbounded. That makes additive thermodynamics a less useful tool to calculate the fractal dimensions. Indeed, coverings by atoms $O \in \wp_n$ of $n$-th generation of the construction are not efficient to estimate the Hausdorff measure. This sets a number of interesting and difficult problems which had grow into new methods and results. The most used approach is to generalize the thermodynamic formalism for subadditive or almost additive families of potentials, generating a new class of pressure-like dynamical indicators. We refer to the book of Barreira \cite{barreira.dimension.2011} for a comprehensive exposition of these methods, applied to the dimension theory of dynamical systems and \cite{barreira.gelfert.2011} and \cite{pesin.review.2010} for a review of results and open problems. We also need to mention the study non conformal self-affine sets, the simplest geometric models of a non conformal limit set. This has arised a number of formulas and bounds for the Hausdorff dimension for several models as Bedford-McMullen and Gatzouras-Lalley carpets, among others. We refer to the survey of Falconer \cite{falconer.survey.2013} for results, examples and references on the subject. 

The aim of this paper is to show that the standard additive thermodynamic formalism and the Bowen equation can still be used to get useful upper and lower bounds for these dimensional indicators in the non conformal setting.

\section{Statement of results}
Let $\mathcal{G} = \{g_i\}$ be an IFS defining a regular construction as the one defined in (\ref{definition.regular.constructions}) and $f: \bigcup_iU_i \to U$ be the piecewise expanding map defined by the inverses $f|U_i = g_i^{-1}$ of the contracting branches. Let $N \subset U$ be a $d$-dimensional compact cube and
$f : \bigcup_iN_i \to N \subset M$ the corresponding piecewise expanding map and $\Lambda = \bigcap_{n=0}^{+\infty}f^{-n}\left(\bigcup_iN_i\right)$ be the maximal $f$-invariant Cantor set, that is, $\Lambda$ is the attractor of the regular construction $\mathcal{G}$. Before to state our main result we need to introduce the following quantities:
\begin{itemize}
 \item we define
 \begin{equation}\label{defining.|Jg|}
\|\underline{Jg}\|   =  \min_{i = 1, \cdots , s}\inf_{x \in N_i}Jg_i(x)
 \end{equation}
and
\begin{equation}\label{defining.|Dg|}
 \|Dg\|   = \max_{i = 1, \cdots , s}\sup_{x \in N_i}\|Dg_i(x)\|
\end{equation}
 where $Jg_i(x) = |\det(Dg_i(x))|$ is the Jacobian with respect to the Riemannian volume and $Dg_i(x)$ the derivative at $x$;
 \item define $0 < \epsilon < 1$ as:
 \begin{equation}\label{defining.epsilon}
  \epsilon = \dfrac{\|\underline{Jg}\|}{\quad\quad\|Dg\|^{d - 1}} \leq \|Dg\| < 1.
 \end{equation}
The number $\epsilon > 0$ is a characteristic scale at which every $n$-atom $P \in \wp_n$ can be uniformly covered by open $2\epsilon^n$-balls and their volume are uniformly comparable, independently of $n > 0$. These special coverings play the role of decompositions of every $P \in \wp$ into pieces with uniformly bounded geometrical distortion. 
\item We also denote
\begin{equation}\label{defining.lambda0}
\lambda_0 = \log\left(\inf_{x \in \Lambda}Jf(x)\right) 
\end{equation}
and
\begin{equation}\label{defining.lambda1}
\lambda_1 = \log\left(\sup_{x \in \Lambda}Jf(x)\right), 
\end{equation}
where $Jf(x) = |\det(Df(x)|$ is the Jacobian and
$$
\Lambda = \bigcap_{n=0}^{+\infty}f^{-n}(N)
$$
is the maximal $f$-invariat subset in $N$;
\item finally, we let
\begin{equation}\label{defining.|Df|}
\|Df\| = \max\limits_{i}\sup_{x \in U_i}\|Df(x)\|
\end{equation}
the largest rate of expansion of the length under $f$. 
\end{itemize}

We will prove the following 

\begin{theorem}\label{main.thm.1}
With the above assumptions and definitions of constants we have that:
 \begin{equation}\label{main.inequality}
  L \leq \dH(\Lambda) \leq \underline{\dim_B}(\Lambda) \leq \overline{\dim_B}(\Lambda) \leq U,
 \end{equation}
 where $\dH(\Lambda)$ is the Hausdorff dimension,  $\underline{\dim_B}(\Lambda)$ (resp. $\overline{\dim_B}(\Lambda)$) the lower (resp. upper) box-counting dimension and 
 $$
 U = d + \dfrac{\lambda_0(1-\alpha)}{\log\epsilon}
 \quad\text{and}\quad
 L = d + \dfrac{\overline{\Sigma} + \lambda_1(1-\alpha)}{\log\epsilon}.
 $$
 $\alpha$ is the solution to the Bowen equation $P(f|\Lambda,-\alpha\log{Jf}) = 0$ and 
 $$
 \overline{\Sigma} = \log\left((\|Df\|\epsilon)^{-\overline{d_B(\Lambda)} + d}\|Df\|^{-d}e^{\lambda_0}\right)^{-1}.
 $$ 
\end{theorem}

The constant $\overline\Sigma$ is an upper bound for a function $\Sigma(x)$, which measures the rate of exponential decay as $n \to +\infty$, of the volume fraction of an atom $\wp_n(x)$ containing $\Lambda$, covered by a suitable efficient covering of $\Lambda \cap \wp_n(x)$ by $2\epsilon^n$-balls.

As a consequence of the thermodynamics, the bounds in Theorem \ref{main.thm.1} are robust and vary smoothly under small $C^1$ perturbations of the construction. This is a plain consequence of the smoothness of the additive pressure $P(\phi)$ with respect to the potential $\phi$ and the robustness and structural stability of hyperbolic repellers.

Theorem \ref{main.thm.1} answer partially a question possed to me by Marcelo Viana during the preparation of my doctoral dissertation at IMPA. He asked me what kind of relationship exists between the dynamical and Hausdorff dimension. The quantities $L$ and $U$  are \textit{are dynamical indicators} and they point to a precise functional relationship between these dimensional indicators and the box-counting and the Hausdorff dimension. In fact, the functions
$$
F(t,s) = d + \dfrac{t + \lambda_1(1-s)}{\log\epsilon} \quad\text{and}\quad G(t, \alpha) = d + \dfrac{t + \lambda_0(1-s)}{\log\epsilon} \quad t \in \real, \ 0 < s \leq 1,
$$
are affine stricly decreasing functions of $t$. Then, one can conjecture that there exists a unique $\Sigma_H = \Sigma(\dH(\Lambda)) \leq \overline{\Sigma}$ and $\Sigma_b = \Sigma(\overline{\dim}_B(\Lambda)) \geq 0$ such that
$$
\dH(\Lambda) = d + \dfrac{\Sigma_H + \lambda_1(1-\alpha)}{\log\epsilon} = F(\Sigma_H, \alpha)
$$
and
$$
\overline{\dim}_B(\Lambda) = d + \dfrac{\Sigma_b + \lambda_0(1-\alpha)}{\log\epsilon} = G(\Sigma_b, \alpha),
$$
where $\alpha$ is the solution to Bowen's equation. In other words both, there exists an explicit functional relation between the Hausdorff and box-counting dimension and the dynamical dimension. Unfortunately, at the present we don't know how to compute the constants $\Sigma_H$ and $\Sigma_b$ in contrast with our upper and lower bounds, which can be explicitly calculated from the data of the construction, as we will see below.

\section{Two non conformal examples}\label{section.discussion}

We will start our study with the simplest non conformal models as a test ground for our methods, letting the study of more general cases for future indagations. For this we applicate our results to the non conformal plane self-affine model discussed by Pollicot and Weiss in \cite{pollicot.weiss.1994} and Bedford-McMullen carpets. See \cite{falconer.survey.2013} for a survey of results of dimension theory of plane self affine sets. 

\subsection{Pollicot-Weiss model:} let $0 < \tau < \beta$ with $\tau, \beta \neq 1/2$, be real numbers and consider the couple of affine contractions:
$$
g_0(x,y) = \left[ 
           \begin{array}{cc}
            \beta  & 0 \\
             0     & \tau
           \end{array}
           \right]\left[\begin{array}{c}
                         x \\
                         y
                        \end{array}
                  \right]                  
                  \quad\text{and}\quad 
                  g_1(x,y) = \left[ 
           \begin{array}{cc}
            \beta  & 0 \\
             0     & \tau
           \end{array}
           \right]\left[\begin{array}{c}
                         x \\
                         y
                        \end{array}
                  \right] + \left[\begin{array}{c}
                         1 - \beta\\
                         1 - \tau
                        \end{array}
                  \right]
$$
This generates a limit set $\Lambda$ starting with two rectangles $\R_0$ and $\R_1$ of width $\beta$ and height $\tau$ attached to the down-left and upper-right corners of unit rectangle, respectively. This is the simplest non conformal self affine limit set. If $\tau < 1/2$ the rectangles are disjoint and the limit is a plane non conformal Cantor set. These Cantor sets are volume-irreducible. Degenerate configurations in which rectangles $\R_i$ are situated above a single interval in the $OX$ axis, are excluded; in this case $\Lambda$ is contained in a single vertical line and it is volume reducible, as it is easy to see. If $\tau = 1/2$ the rectangles intersect at the border and the construction converges to the graph of a continuous non differentiable function, unless $\beta = 1/2$, in which case the set is conformal and the limit is the diagonal $x = y$, at least with this choice of translations. A remarkable feature of this example is that, for almost every $\beta$ such that 
$0 < \tau < 1/2 < \beta$, the projection of $\Lambda$ onto the $OX$ axis is an absolutely continuous Bernoulli convolution. See \cite{falconer.survey.2013} and \cite{pollicot.weiss.1994}. For this particular choice of parameters, Hausdorff and box-counting dimension counting dimension are equal.

Notice also that the construction does not satisfy our border condition. However, we can separate the rectangles from the border with the same arguments used previously for Bedford-McMullen carpets. Certainly these perturbations modify the number-theoretical properties which are responsible for the remarkable geometry of these examples. However, as we are arguing, our goal is to give bounds for the Hausdorff dimension. We are going to use these exact formulas as a test case for our inequalities. Let us begin computing the constants involved in our formulas. 
$$
Dg_i = \begin{bmatrix}
        \beta & 0 \\
        0 & \tau
       \end{bmatrix},
\quad
f|\R_0 = 
 \begin{bmatrix}
  \beta^{-1} & 0\\
  0 & \tau^{-1}
 \end{bmatrix}
 \begin{bmatrix}
  x \\
  y
 \end{bmatrix}
\quad\text{and}\quad
f|\R_1 = 
 \begin{bmatrix}
  \beta^{-1} & 0\\
  0 & \tau^{-1}
 \end{bmatrix}
 \begin{bmatrix}
  x+\beta-1 \\
  y+\tau-1
 \end{bmatrix}.
$$
Then
$$
Df = \begin{bmatrix}
        \beta^{-1} & 0 \\
        0 & \tau^{-1}
       \end{bmatrix}
\quad\text{and}\quad
\|Dg\| = \beta, \quad \|\underline{Jg}\| = \tau\beta, \quad \|Df\| = \tau^{-1} \quad\text{and}\quad Jf = (\beta\tau)^{-1}
$$
Thus,
$$
\lambda_0 = \lambda_1 = \log(Jf) = \log((\beta\tau)^{-1}) = -\log(\beta\tau).
$$ 
On the other side,
$$
\epsilon = (1-\delta)\left(\dfrac{\|\underline{Jg}\|}{\|Dg\|^{d - 1 + \delta}}\right)^{\frac{1}{1-\delta}} = (1-\delta)\left(\dfrac{\beta\tau}{\beta^{1 + \delta}}\right)^{\frac{1}{1-\delta}} = (1-\delta)(\beta^{-\delta}\tau)^{\frac{1}{1-\delta}}.
$$
In particular,
$$
\delta = 0 \quad\imply\quad \epsilon = \tau.
$$
To calculate the solution to the Bowen equation we argue as follows. As the Jacobian $Jf$ is constant we have
$$
-s\sum_{k=0}^{n-1}\log(Jf) = \log[Jf]^{-sn}
$$
and thus
$$
\sum_{(i_1, \cdots , i_n) \in 2^n}\exp\left(-s\sum_{k=0}^{n-1}\log(Jf)\right) = \sum_{(i_1, \cdots , i_n) \in 2^n}[Jf]^{-sn} = 2^n[Jf]^{-sn} = (2Jf^{-s})^n. 
$$
Therefore,
$$
\limsup_{n \to +\infty}\dfrac{1}{n}\log\left(\sum_{(i_1, \cdots , i_n) \in 2^n}\exp\left(-s\sum_{k=0}^{n-1}\log(Jf)\right)\right) = \log(2Jf^{-s}) = 0
$$
if and only if $2Jf^{-s} = 1$ so giving: 
$$
\alpha = \dfrac{\log{2}}{\log{Jf}} = -\dfrac{\log{2}}{\log(\beta\tau)}
$$
for the dynamical dimension and
$$
1 - \alpha = 1 + \dfrac{\log{2}}{\log(\beta\tau)} = \dfrac{\log(2\beta\tau)}{\log(\beta\tau)}.
$$
Then, our upper bound for the box-counting dimension for $\delta = 0$ is
\begin{eqnarray*}
U  & = & 2 + \dfrac{\lambda_0(1-\alpha)}{\log(\epsilon)}
             =  
2 + \dfrac{\log((\beta\tau)^{-1})}{\log(\tau)}
\dfrac{\log(2\beta\tau)}{\log(\beta\tau)}\\
& = & 2-\dfrac{\log(2\beta\tau)}{\log(\tau)} = 1 - \dfrac{\log(2\beta)}{\log\tau} \\
& = & -\dfrac{\log(2\beta/\tau)}{\log\tau} = \dfrac{\log(2\beta/\tau)}{\log(1/\tau)}.
\end{eqnarray*}
Concluding that \textit{for every $0 < \tau < \beta < 1$ and $\tau < 1/2$}
$$
\dH(\Lambda) \leq \underline{\dim}_B(\Lambda) \leq \overline{\dim}_B(\Lambda) \leq \dfrac{\log(2\beta/\tau)}{\log(1/\tau)},
$$
excluding degenerate configurations leading to volume reducible sets.

This bound can be also obtained arguing directly. In the $n$-th stage of the construction we have $2^n$ rectangles,
$$
R(i_0, \cdots , i_{n-1}) = g_{i_{n-1}} \circ \cdots g_{i_0}(R).
$$
Each one of them can be decomposed into $(\beta/\tau)^m$ squares of side $\tau^m$, $m \geq n$. This forms a collection $\mathcal{B}_m = \{R(\tau^m)\}$. The number of these rectanbles are
$$
\#\mathcal{B}_m = 2^n(\beta/\tau)^m.
$$
If $m = n$ this forms a minimal covering set of the limit set $\Lambda$ by $\tau^n$-squares. Then, 
$\mathcal{N}(\Lambda, \tau^n) \leq 2^n(\beta/\tau)^n$ and
$$
\limsup_{n \to +\infty}\dfrac{\log(\mathcal{N}(\Lambda, \tau^n))}{\log(1/\tau^n)} \leq \dfrac{\log(2\beta\tau)}{\log(1/\tau)},
$$
so proving that, for every $0 < \tau < \beta < 1$, 
$$
\dH(\Lambda) \leq \underline{\dim}_B(\Lambda) \leq \overline{\dim}_B(\Lambda) \leq \dfrac{\log(2\beta\tau)}{\log(1/\tau)}
$$
Then our formula gives good upper bounds for this collection of examples. Now we turn to the estimation of lower bound. For this we need to calculate
$$
\overline\Sigma = \log((\|Df\|\epsilon)^{-\overline{d_B(\Lambda)} + d}\|Df\|^{-d}e^{\lambda_0})^{-1}.
$$
As $\|Df\| = \tau^{-1}$ and $\epsilon = \tau$ we get
$$
\overline\Sigma = -\log(\tau^2(\beta\tau)^{-1}) = -\log(\tau/\beta) = \log(\beta/\tau).
$$
Therefore,
\begin{eqnarray*}
L = 2 + \dfrac{\overline\Sigma + \lambda_1(1-\alpha)}{\log\epsilon} & = & 2 + 
\dfrac{\log\left(\frac{\beta}{\tau}\right) - \log(\beta\tau)\left(1 + \frac{\log(2)}{\log(beta\tau}\right)}{\log(\tau)}\\
& = & 2 + \dfrac{\log\left(\frac{\beta}{\tau}\right) - \log(2\beta\tau)}{\log(\tau)}\\
& = & 2 -\dfrac{2\log(\tau) + \log(2)}{\log(\tau)} = 2 + \dfrac{\log(2\tau^2)}{\log\left(\frac{1}{\tau}\right)}\\
& = & \dfrac{\log(2\tau^2) - 2\log(\tau)}{\log\left(\frac{1}{\tau}\right)} = \dfrac{\log(2)}{\log\left(\frac{1}{\tau}\right)}.
\end{eqnarray*}
Hence,
$$
L = \dfrac{\log(2)}{\log\left(\frac{1}{\tau}\right)} \quad\text{for every}\quad 0 < \tau < 1/2 \quad\text{and}\quad 0 < \tau \leq \beta < 1. 
$$
Let us compare these bounds with the explicit formulae given by Pollicot-Weiss in \cite{pollicot.weiss.1994}:
\ 
\\
\\
\begin{itemize}
 \item $\tau = \beta$: conformal case,
 $$
 \dim_B(\Lambda) = \dH(\Lambda) = \dfrac{\log{2}}{\log(1/\beta)};
 $$
 \item $0 < \tau < \beta < 1/2$: 
 $$
 \dim_B(\Lambda) = \dH(\Lambda) = \dfrac{\log{2}}{\log(1/\beta)};
 $$
 \item $\beta = 2^{-\frac{1}{n}}, \ n \in \naturales$:
 $$
  \dim_B(\Lambda) = \dH(\Lambda) = \dfrac{\log(2\beta/\tau)}{\log(1/\tau)};
 $$
 \item there exists a $0 < \gamma < 1$ such that for almost every $\gamma < \beta < 1$:
 $$
  \dim_B(\Lambda) = \dH(\Lambda) = \dfrac{\log(2\beta/\tau)}{\log(1/\tau)};
 $$
 \item $\beta > 1/2$:
 $$
  \dim_B(\Lambda) = \dfrac{\log(2\beta/\tau)}{\log(1/\tau)} \quad \beta > 1/2.
 $$
\end{itemize}
Moreover, they conjecture that, for almost every $\tau$ and $\beta$ with $0 < \tau \leq \beta < 1$ and $\tau \leq 1/2$ it holds that $\dim_B(\Lambda) = \dH(\Lambda)$. Our bounds fits very well with these formulas, since:
$$
L = \dfrac{\log(2)}{\log\left(\frac{1}{\tau}\right)} \leq \dfrac{\log(2)}{\log\left(\frac{1}{\beta}\right)} \leq \dfrac{\log(2\beta/\tau)}{\log(1/\tau)} = U
$$
\textit{for every} $0 < \tau \leq \beta < 1$ and $\tau \leq 1/2$ so, for all the cases covered by the explicit formulas, it holds that
$$
L \leq \dH(\Lambda) \leq \underline{\dim}_B(\Lambda) \leq \overline{\dim}_B(\Lambda) \leq U
$$

\subsection{Bedford-McMullen carpets:} let  $\R = [0, 1] \times [0, 1]$ be the unit square, $1 < n < m$ and postive integers and define
$$
g_{ij}(x,y) =  
\begin{bmatrix}
 m^{-1} & 0 \\
 0      & n^{-1}
\end{bmatrix}
\begin{bmatrix}
x \\
y
\end{bmatrix}
+ 
\begin{bmatrix}
i/n \\
j/m
\end{bmatrix}
\quad\text{such that}\quad \R_{ij} = g_{ij}(\R), \ (i,j) \in S.
$$
$\mathcal{G} = \{g_{ij}: (i,j) \in S\}$ is a contractive non conformal IFS. The template of the construction os a collection $\{\R_{ij}\}$ of rectangles of sides $1/n \times 1/m$ labeled by some subset $S \subset \{1, \cdots n\} \times \{1, \cdots , m\}$. Figure \ref{figure(2)} is an example of the template of Bedford-McMullen carpet. There exist exact formulas for the box-counting dimension and Hausdorff dimension of Bedford-McMullen carpets, namely:
$$
\dim_B(\Lambda) = \dfrac{\log n_1}{\log n} + \dfrac{\log\left(\frac{1}{n_1}\sum_{i=1}^nN_i \right)}{\log m} \quad\text{and}\quad \dH(\Lambda) = \dfrac{\log\left(\sum_{i=1}^nN_i^{\log n/\log m}\right)}{\log n},
$$
where $N_i$ is the number of rectangles $R_{ij}$ chosen in the column $i$ of the template and $n_1$ is the number of columns with at least one rectangle. See \cite{falconer.survey.2013}. 

\begin{figure}
\centering
\includegraphics[width=0.7\linewidth]{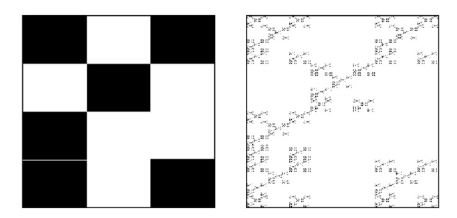}
\caption{}\label{figure(2)}
\end{figure}

It might happen however that, depending on the configuration of the template, this geometric construction might violate our conditions of regularity. For exmaple, the  template may have rectangles with overlapping borders or intersecting the border the unit square, so violating the border condition. Also, the  limit set might be volume reducible. This is the case, if all the rectangles projects onto the same interval on the $x$-axis since then the limit set might be a Cantor set contained in a vertical line. Moreover, the limit may converge to a Weierstrass-like graph, which is a connected set. We will call these cases \textit{degenerate}. However, we can perturb the initial template in order to get a regular construction, in the sense of our definitions. For this we choose a small $\epsilon > 0$ and introduce the rectangle
$$
\R^{\epsilon} = [\epsilon , 1-\epsilon] \times [\epsilon , 1-\epsilon]
$$
and define
$$
\R^{\epsilon}_{ij} = g_{ij}(\R^{\epsilon}), \ (i,j) \in S
$$
The IFS $\{g_{ij}\}$ acting upon the initial rectangle $\R^{\epsilon}$ give us a template satisfying the open and border condition. Also we can perturb the location of each rectangle given by moving it slightly horizontally or vertically in order to get a volume irreduble set. This \textit{perturbed regular geometric construction} $\mathcal{G}^{\epsilon} = \{g^{\epsilon}_{ij}\}$ acting upon $\R^{\epsilon}$ converges to a Cantor set $\Lambda_{\epsilon}$ which satisfies our hypothesis and is very near of the limit set $\Lambda$. As our bounds are robust this suggest that we can obtain, as we shall see below,  meaningful upper and lower bounds of the Hausdorff and box-counting dimension \textit{for every Bedford-McMullen carpet}, even if they are degenerate in the sense of definition \ref{definition.regular.constructions}. Let us calculate the upper and lower bounds given by Theorem \ref{main.thm.1}. To begin with,
$$
\|Dg\| = 1/n \quad\text{and}\quad \|\underline{Jg}\| = 1/nm,
$$
so giving
$$
\epsilon(\delta) = (1-\delta)\left(\dfrac{\|\underline{Jg}\|}{\quad\|Dg\|^{d - 1 + \delta}}\right)^{\frac{1}{1-\delta}} = (1-\delta)\left(\dfrac{n^{\delta}}{m}\right)^{1/1-\delta}.
$$
Thus, for $\delta = 0$,
$$
\epsilon = \dfrac{1}{m},
$$
as it is natural, since every rectangle of sides $1/n$ and $1/m$ can be decomposed into $m/n$ squares of side $1/n$ so, $\epsilon = 1/n$ is the characteristic scale of the system. Moreover,
$$
Df = 
\begin{bmatrix}
 n & 0 \\
 0 & m
\end{bmatrix}
$$
so
$$
\|Df\| = m, \quad Jf = nm \quad\text{and}\quad \lambda_0 = \lambda_1 = \log(nm).
$$
Our calculation for $\alpha$, the solution to the Bowen equation $P(f|\Lambda,-s\log Jf) = 0$ is
$$
\alpha = \dfrac{\log\left(\sum_{i=1}^nN_i\right)}{\log nm}.
$$
Thus,
$$
1 - \alpha = 1 - \dfrac{\log\left(\sum_{i=1}^nN_i\right)}{\log nm} = \dfrac{\log\left(\frac{nm}{\log\left(\sum_{i=1}^nN_i\right)}\right)}{\log nm}
$$
Therefore,
\begin{eqnarray*}
U & = & 2 + \dfrac{\lambda_0(1-\alpha)}{\log(\epsilon)} =  2 - \dfrac{\log\left(\dfrac{nm}{\log\left(\sum_{i=1}^nN_i\right)}\right)}{\log(m)}\\
& = & \dfrac{\log(m) - \log(n) + \log\left(\sum_{i=1}^nN_i\right)}{\log(m)}
=  \dfrac{\log\left(\frac{m}{n}\sum_{i=1}^nN_i\right)}{\log(m)}. 
\end{eqnarray*}
Now,
\begin{eqnarray*}
\dim_B(\Lambda) & = & \dfrac{\log n_1}{\log n} + \dfrac{\log\left(\frac{1}{n_1}\sum_{i=1}^nN_i \right)}{\log m}
 \leq  \dfrac{\log n_1}{\log n} - \dfrac{\log(n_1)}{\log(m)} + \dfrac{\log\left(\sum_{i=1}^nN_i\right)}{\log(m)}\\
& \leq & \dfrac{\log\left(\frac{m}{n}\sum_{i=1}^nN_i\right)}{\log(m)} = U
\end{eqnarray*}
for every $1 \leq n_1 \leq n$ since
$$
\dfrac{\log n_1}{\log n} - \dfrac{\log(n_1)}{\log(m)} \leq \dfrac{\log(m) - \log(n)}{\log(m)}, \quad\text{for every integer}\quad 1 \leq n_1 \leq n.
$$
To see this we can check that the function
$$
F(x) = \dfrac{\log(x)}{\log(n)} - \dfrac{\log(x)}{\log(m)}
$$
strictly increasing over the interval $[1, n]$. Indeed,
$$
F'(x) = \dfrac{1}{x}\left(\dfrac{1}{\log(n)} - \dfrac{1}{\log(m)} \right) > 0, \quad \forall \ x \in [1,n].
$$
Then, it attains its maximum over the interval $[1, n]$ at $x = n$:
$$
F(n) = 1 - \dfrac{\log(n)}{\log(m)} = \dfrac{\log(m) - \log(n)}{\log(m)} = \dfrac{\log(m/n)}{\log(m)}
$$
Then, $\dim_B(\Lambda) \leq U$ for every configuration with $n < m$. On the other hand, to calculate the lower bound we need to compute $\overline{\Sigma}$. As $\|Df\| = m$ and $\epsilon = 1/m$ we have
$$
\overline{\Sigma} = -\log((\|Df\|\epsilon)^{\overline{d_B}(\Lambda) + 2}\|Df\|^{-2}e^{\lambda_0}) = -\log(m^{-2}(nm)) = \log\left(\dfrac{m}{n}\right). 
$$
Therefore,
\begin{eqnarray*}
L & = & 2 + \dfrac{\overline{\Sigma} + \lambda_1(1-\alpha)}{\log(\epsilon)}  =  2 - \dfrac{\log\left(\dfrac{m}{n}\right) + \log\left(\frac{nm}{\log\left(\sum_{i=1}^nN_i\right)}\right)}{\log(m)} \\
& = & 2 - \dfrac{\log\left(\frac{m^2}{\log\left(\sum_{i=1}^nN_i\right)}\right)}{\log(m)}
 =  \dfrac{\log\left(\sum_{i=1}^nN_i\right)}{\log(m)}.
\end{eqnarray*}
We will show that, for every pair of integers $1 < n < m$,
$$
L = \dfrac{\log\left(\sum_{i=1}^nN_i\right)}{\log(m)} < \dfrac{\log\left(\sum_{i=1}^nN_i^{\log n/\log m}\right)}{\log n} = \dH(\Lambda)
$$ 
Let
$$
f(x) = x\log\left(\sum_{i=1}^nN_i^{1/x}\right).
$$
We will prove that $f(x)$ is strictly increasing for $x > 1$. We may suppose, without loss of generality, that $N_i > 1$ for every $i = 1, \cdots , n$. Then:
\begin{eqnarray*}
f'(x) & = & \log\left(\sum_{i=1}^nN_i^{1/x}\right) + x\dfrac{\sum_{i=1}^n(-1/x^2)\log(N_i)N_i^{1/x}}{\sum_{i=1}^nN_i^{1/x}}\\
& = & \log\left(\sum_{i=1}^nN_i^{1/x}\right) + \dfrac{\sum_{i=1}^n(-1/x)\log(N_i)N_i^{1/x}}{\sum_{i=1}^nN_i^{1/x}}\\
& = & \log\left(\sum_{i=1}^nN_i^{1/x}\right) + \dfrac{\sum_{i=1}^n\log(N_i^{-1/x})N_i^{1/x}}{\sum_{i=1}^nN_i^{1/x}}\\
& = & \dfrac{\sum_{i=1}^nN_i^{1/x}\log\left(\sum_{i=1}^nN_i^{1/x}\right) + \sum_{i=1}^n\log(N_i^{-1/x})N_i^{1/x}}{\sum_{i=1}^nN_i^{1/x}}\\
& = & \dfrac{\sum_{i=1}^nN_i^{1/x}\left(\log\left(\sum_{j=1}^nN_j^{1/x}\right) + \log\left(N_i^{-1/x}\right)\right)}{\sum_{i=1}^nN_i^{1/x}}\\
& = & \dfrac{\sum_{i=1}^nN_i^{1/x}\log\left(\frac{\sum_{j=1}^nN_j^{1/x}}{N_i^{1/x}}\right)}{\sum_{i=1}^nN_i^{1/x}} > 0,
\end{eqnarray*}
since,
$$
N_i^{1/x} > 0 \quad\text{and}\quad \dfrac{\sum_{j=1}^nN_j^{1/x}}{N_i^{1/x}} > 1, \quad i = 1, \cdots , n.
$$
In particular,
$$
\log\left(\sum_{i=1}^nN_i\right) < \dfrac{\log(m)}{\log(n)}\log\left(\sum_{i=1}^nN_i^{\log n/\log m}\right), \quad\text{for every}\quad 1 < n < m \in \mathbb{N}.
$$
This proves that $L \leq \dh(\Lambda)$. Hence,
$$
L = \dfrac{\log\left(\sum_{i=1}^nN_i\right)}{\log(m)} \leq \dH(\Lambda) \leq \dim_B(\Lambda)  \leq \dfrac{\log\left(\frac{m}{n}\sum_{i=1}^nN_i\right)}{\log(m)} = U 
$$
for every Bedford-McMullen carpet defined by a pair of integers $1 < n < m$ and any configuration of rectangles  
$\{\R_{ij}: (i,j) \in S\}$ with $S \subset \{1, \cdots n\} \times \{1, \cdots , m\}$. Let us compute these quantities for a particular example. See figure \ref{figure(2)}. In this case, $n = 3$ and $m = 4$ and the configuration of the template is described by the $0-1$ matrix, where $R_{ij} = 1$ iff $(i,j) \in S$:
$$
R = 
\begin{bmatrix}
1 & 0 & 1 \\
0 & 1 & 0 \\
1 & 0 & 0 \\
1 & 0 & 1
\end{bmatrix}
\quad\imply\quad 
N_1 = 3, \quad N_2 = 1 \quad N_3 = 2, \quad\text{and}\quad n_1 = 3.
$$
Thus, Bedford-McMullen formulas give $\dim_B(\Lambda) = 1.5000$ and $\dH(\Lambda) = 1.4866$, while our upper and lower bounds are
$$
U = 2 + \dfrac{\lambda_0(1-\alpha)}{\log\epsilon} = 1.5000 \quad\text{and}\quad 
L = 2 + \dfrac{\overline{\Sigma} + \lambda_1(1-\alpha)}{\log\epsilon} =  1.2925,
$$
with 
$$
\epsilon = 0.2500, \quad \alpha = 0.7211, \quad \overline\Sigma = 0.2877, \quad \lambda_0 = \lambda_1 = 2.4849.
$$
There are other examples as Gatzouras-Lalley, Bara\'nski and Feng-Wang carpets with explicit formulas which can be contrasted with the present results. See \cite{falconer.survey.2013}.

\section{Outline of the proof: methods}\label{section.methods}
 
We borrow the method from the work of Vanderlei Horita and Marcelo Viana \cite{vanderlei.viana}. There it is proved that certain nonuniformly expanding repeller $\Lambda$ has Hausdorff dimension strictly less that the dimension of ambient manifold using volumetric calculations. The starting step is the simple remark that the volume of a ball of radius $r$ is uniformly comparable with $r^d$. Consequently, $\sum_i\Vol(B_i)^\beta$ are uniformly comparable with $\sum_i\diam(B_i)^{d\beta}$ for covering by metric balls $B_i$. Then, we will concentrate into volume estimates of the coverings. Then a characteristic scale $0 < \epsilon < 1$  is chosen such that, for every atom $P \in \wp_n$ in the $n$-th stage of the construction, there exists a covering $\mathcal{B}_n(P)$ by $2\epsilon^n$-balls such that $\sum_{B \in \mathcal{B}_n(P)}\Vol(B)$ is bounded by $\Vol(P)$ up to a multiplicative constant only depending on the geometry of the ambient space. This is inequality (\ref{bound.1}). These coverings are a sort of decompositions of $P$ into pieces with uniformly bounded geometrical distortion, which allows to take care of the non conformality of the IFS.
 
Using (\ref{bound.1}) and thermodynamic formalism one prove that the sums $\sum_{B \in \mathcal{B}_n}\Vol(B)^\beta$ are bounded by a suitable geometric progression with ratio $\overline\theta(\beta) = e^{\lambda_0(\alpha-1)}\epsilon^{d(\beta-1)}$, up to a uniform multiplicative constant, where $\alpha$ is the solution to Bowen equation, $\epsilon$ is a our characteristic scale and $\lambda_0$ is given by the Jacobian of $f$as (\ref{defining.lambda0}). This our first fundamental inequality (\ref{first.fundamental.inequality}) (see section \ref{section.dB.estimations}). The ratio $\overline\theta(\beta)$ is a stricly decreasing function of $\beta$. Therefore, if $\overline{\beta}^*$ is the (unique) solution to the equation $\overline\theta(\beta) = 1$ then $\beta > \overline{\beta}^*$ implies that $\overline\theta(\beta) < 1$. Therefore, $\sum_{B \in \mathcal{B}_n}\Vol(B)^\beta \to 0^+$ exponentially, as $n \to +\infty$. This shows that $\overline{B}_{d\beta}(\Lambda) = 0$ for every $\beta > \overline{\beta}^*$ (see Appendix A for definitions). Thus, $\dH(\Lambda) \leq \overline{\dim_B}(\Lambda \leq \overline{\dim_B}(\Lambda) \leq d\overline{\beta}^*$. Solving for $d\overline{\beta}^*$ we get the upper bound
$$
U = d + \dfrac{\lambda_0(1-\alpha)}{\log\epsilon}.
$$

To bound from below the Hausdorff dimension it is a more delicate. We need to bound the corresponding $\beta$-sums $\sum_{B \in \mathcal{B}_0}\Vol(B)^\beta$ calculated over nonuniform $\zeta$-coverings by balls with variable radius $< \zeta$. We may suppose, without loss of generality, that $\mathcal{B}_0$ is a finite covering of $\Lambda$. The idea is to choose $n$ conveniently such that $2\epsilon^n$ is less than the Lebesgue number of $\mathcal{B}_0$ in such way that $\mathcal{B}_n$ is subordinated to the covering $\mathcal{B}_0$. For this is necessary to choose $n$ large enough, depending on the $\zeta$-covering $\mathcal{B}_0$ and we get that the sums 
$\sum_{B \in \mathcal{B}_0}\Vol(B)^\beta$ are bounded from below by $\sum_{B \in \mathcal{B}_n}\Vol(B)^\beta$, up to a multiplicative constant $C[\epsilon^{nd(1-\beta)}\zeta^{d(1-\beta)}]$, where $C > 0$ is a uniform constant depending only on the geometry of the ambient space. This is (\ref{second.fundamental.inequality}) proved in lemma \ref{lemma.second.fundamental.inequality}.

Now, $\mathcal{B}_n$ is too large a covering and we need a more efficient choice. We choose a suitable efficient covering $\mathcal{B}_n(\Lambda)$ of $\Lambda$ by $2\epsilon^n$-balls, starting with an efficient covering of $\Lambda \cap P$, for each piece $P \in \wp_n$ of the $n$-th stage of the construction $\Lambda_n$, using $\epsilon^n$ balls and then extending it to a covering by $2\epsilon^n$-balls of $P$, so forming a new covering $\mathcal{B}_n(\Lambda)$. Then we estimate $\rho(P)$ the fraction of $\Vol(P)$ which is covered by the $2\epsilon^n$-balls in  $\mathcal{B}_n(\Lambda \cap P)$, for each $P \in \wp_n$ and prove that the asymptotic rate of decay of $\rho(\wp_n(x))$ is a bounded measurable function $\Sigma(x)$, where $\wp_n(x)$ denotes the piece of $\wp_n$ containing $x$. This function is bounded from above (resp. from below) by suitable constant $\overline\Sigma$ (resp. $\underline\Sigma$). Thermodynamic formalism and volumetric methods allows to prove that, for every small $\gamma > 0$ and for sufficiently large $n$, depending on $\gamma$, the sums $\sum_{B \in \mathcal{B}_n(\Lambda)}\Vol(B)^\beta$ are bounded from below by $[e^{-\lambda_1(1-\alpha)}e^{-(\overline{\Sigma} + \gamma)}\epsilon^{d(\beta-1)}]^n$ up to a uniform constant. This is our third fundamental inequality (\ref{third.fundamental.inequality}) proved in lemma \ref{lemma.third.fundamental.inequality}.

Then, we use (\ref{second.fundamental.inequality}) and (\ref{third.fundamental.inequality}) to show that, for every \textit{sufficiently} small $\zeta$ and for every small $\gamma$ and every $\zeta$-covering ${\mathcal B}_0$ the sums $\sum_{B \in {\mathcal B}_0}\Vol(B)^{\beta} \geq C[e^{-n\lambda_1(1-\alpha)}e^{-(\overline{\Sigma} + \gamma)}\zeta^{d(\beta-1)}]$, for every sufficiently large $n$, depending of $\mathcal{B}_0$, where $C > 0$ is a uniform constant. This is our fourth fundamental inequality (\ref{fourth.fundamental.inequality}), proved in lemma \ref{lemma.fourth.fundamental.inequality}. 

If $\underline{\beta}^*$ is the solution to the equation $e^{-\lambda_1(1- \alpha)}e^{-(\overline{\Sigma} + \gamma)}\epsilon^{d(\beta -1)} = 1$ then, by a convenient choice of $n$, one can cancel the dependence on $\zeta$, the covering $\mathcal{B}_0$ and $n$ itself in our fourth fundamental inequality so getting a \textit{uniform lower bound $C > 0$} for the sums $\sum_{B \in \mathcal{B}_0}\Vol(B)^{\underline{\beta}^*}$, for every sufficiently small $\zeta$ and for every $\zeta$-covering $\mathcal{B}_0$ by balls of diameter $< \zeta$. Using the bounded relation between the volume and the radius of a ball we conclude that $\mathcal{H}_{d\underline{\beta}^*,\zeta}(\Lambda) \geq C > 0$ for every sufficiently small $\zeta$, for a uniform constant $C$ not depending on $\zeta$. Therefore, $\mathcal{H}_{d\underline\beta^*}(\Lambda) \geq C > 0$. Thus, by standard arguments of Hausdorff measure, we conclude that $\mathcal{H}_{d\beta}(\Lambda) = +\infty$ for every $\beta < \underline{\beta}^*$ so proving that $d\underline\beta^* \leq \dH(\Lambda)$. Now we solve for $d\underline{\beta}^*$ in the equation $e^{-\lambda_1(1- \alpha)}e^{-(\overline{\Sigma} + \gamma)}\epsilon^{d(\underline{\beta}^* -1)} = 1$ getting the lower bound
$$
L = d + \dfrac{\overline{\Sigma} + \gamma + \lambda_1(1-\alpha)}{\log\epsilon},
$$
which is valid for every small $\gamma > 0$. Passing to the limit $\gamma \to 0^+$ we get the lower bound in (\ref{main.inequality}). This completes the proof of Theorem \ref{main.thm.1}.

The plan of the exposition is as follows. In section \ref{section.dB.estimations} we prove the upper bound for the box-counting dimension introducing the characteristic scale $\epsilon > 0$ and the special coverings $\mathcal{B}_n$. In section \ref{section.dH.estimations} we introduce an efficient covering $\mathcal{B}_n(\Lambda)$ and prove the lower bound for the Hausdorff dimension.

\section{Proof of Theorem \ref{main.thm.1}: the upper bound for the box-counting dimension}\label{section.dB.estimations}

We first prove our upper bound. Let us recall the set up: let $g = \{g_i : i = 1, \cdots , s\}$ be finitely many contractions $g_i : N \to N$ from an embedded $d$-cube $N \subset M$ with positive volume into $N$. We suppose in addition that the boundary is piecewise smooth formed by finitely many codimension one embedded submanifolds. In particular, $\dim_B(\partial{N}) = d-1$. We also use the following volume estimates: there are two constants $0 < A_0 < A_1$, only depending on the Riemannian structure, such that
\begin{equation}\label{volume.diameter.constants.1}
A_0r^d \leq \Vol(B(x,r)) \leq A_1r^d \quad\text{for every}\quad x \in N \ r > 0
\end{equation}
Moreover, we get from (\ref{volume.diameter.constants.1}), that, for every $C > 0$,
\begin{equation}\label{volume.diameter.constants.2}
\dfrac{A_0C^d}{A_1}\Vol(B(x,r)) \leq \Vol(B(x,Cr)) \leq \dfrac{A_1C^d}{A_0}\Vol(B(x,r))
\end{equation}

The following lemma introduces the characteristic scale $0 < \epsilon < 1$. 

\begin{lemma}\label{lemma.1}
There exists $0 < \epsilon < 1$, $N_0 > 0$ a large positive number and a constant $C_0 > 0$, such that for every $n \geq N_0$  and every $n$-th generation atom $P \in \wp_n$, there exists a covering ${\mathcal B}_n(P)$ of $P$ by $2\epsilon^n$-balls such that,
\begin{equation}\label{bound.1}
\Vol(P) \leq \sum\limits_{B \in {\mathcal B}_n(P)}\Vol(B) \leq C_0\Vol(P) \quad\forall \ n \geq N_0.
\end{equation}
The constant $C_0$ only depends on $\dim_B(\partial{N})$, the box-counting dimension of the border $\partial{N}$, the volume $\Vol(N)$ and the geometry of the ambient manifold.
\end{lemma}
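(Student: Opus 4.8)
The plan is to build $\mathcal{B}_n(P)$ from a maximal $2\epsilon^n$-separated subset $\{x_j\}_{j\in J}$ of the atom $P$. By maximality the $2\epsilon^n$-balls centred at the $x_j$ cover $P$, so $\Vol(P)\le\sum_{j}\Vol(B(x_j,2\epsilon^n))$ by subadditivity and the left-hand inequality in (\ref{bound.1}) is free, while the concentric balls $B(x_j,\epsilon^n)$ are pairwise disjoint because the separation is $>2\epsilon^n$. Everything then reduces to the right-hand inequality. The only features of the characteristic scale $\epsilon$ that I would use are that $0<\epsilon<\|Dg\|<1$ — automatic, since $\|\underline{Jg}\|\le\|Dg\|^d$ forces $\epsilon\le(1-\delta)\|Dg\|$ — and the identity
$$
\|Dg\|^{\,d-1+\delta}\,\epsilon^{\,1-\delta}\;=\;(1-\delta)^{1-\delta}\,\|\underline{Jg}\|\;<\;\|\underline{Jg}\|
$$
encoded in the formula defining $\epsilon$.

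Next I would split $J$ according to whether $\dist(x_j,\partial P)\ge\epsilon^n$ (interior indices) or $<\epsilon^n$ (boundary indices). For an interior index $B(x_j,\epsilon^n)\subset P$, so disjointness gives $\sum_{\mathrm{int}}\Vol(B(x_j,\epsilon^n))\le\Vol(P)$, and (\ref{volume.diameter.constants.2}) with $C=2$ upgrades this to $\sum_{\mathrm{int}}\Vol(B(x_j,2\epsilon^n))\le 2^d(A_1/A_0)\,\Vol(P)$. The boundary indices carry the real content. Writing $G_n=g_{i_1}\circ\cdots\circ g_{i_n}$, one has $\partial P=G_n(\partial N)$ and $G_n$ is $\|Dg\|^n$-Lipschitz on $N$. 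Since $\dim_B(\partial N)=d-1$, for the fixed $\delta>0$ there is $\zeta_0>0$ with $\mathcal{N}(\partial N,\zeta)\le\zeta^{-(d-1+\delta)}$ whenever $0<\zeta<\zeta_0$; pushing a $\zeta$-covering of $\partial N$ with $\zeta=(\epsilon/\|Dg\|)^n$ forward by $G_n$ — legitimate once $n\ge N_0(\epsilon)$, precisely because $\epsilon<\|Dg\|$ makes $\zeta\to0$ — covers $\partial P$ by at most $(\|Dg\|/\epsilon)^{n(d-1+\delta)}$ balls of radius $\epsilon^n$. A volume-packing argument based on (\ref{volume.diameter.constants.1}) then bounds the number of boundary indices by a fixed multiple of that count: each boundary $x_j$ lies within $2\epsilon^n$ of one such centre, and since the disjoint balls $B(x_j,\epsilon^n)$ clustered near a fixed centre all fit into a ball of radius $3\epsilon^n$, at most $3^d A_1/A_0$ of them can occur there. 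Multiplying by $\Vol(B(x_j,2\epsilon^n))\le A_1(2\epsilon^n)^d$ yields
$$
\sum_{\mathrm{bd}}\Vol(B(x_j,2\epsilon^n))\;\le\;C'\,\big(\|Dg\|^{\,d-1+\delta}\,\epsilon^{\,1-\delta}\big)^{n},
$$
with $C'$ depending only on $d$ and on $A_0,A_1$.

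To finish, integrating the Jacobian of $G_n$ over $N$ gives $\Vol(P)\ge\|\underline{Jg}\|^n\Vol(N)$, so by the identity above $\big(\|Dg\|^{d-1+\delta}\epsilon^{1-\delta}\big)^n=(1-\delta)^{(1-\delta)n}\|\underline{Jg}\|^n\le\|\underline{Jg}\|^n$, whence the boundary sum is $\le(C'/\Vol(N))\,\Vol(P)$ for every $n$; adding the interior contribution gives (\ref{bound.1}) with $C_0=2^d(A_1/A_0)+C'/\Vol(N)$, which depends only on $d=\dim M$, on $\Vol(N)$, on the value $\dim_B(\partial N)=d-1$ used above, and on the ambient geometry through $A_0,A_1$, while $N_0$ depends only on $\epsilon$. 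The main obstacle is the boundary estimate: the total volume of the $2\epsilon^n$-balls needed along $\partial P=G_n(\partial N)$ is of order $\big(\|Dg\|^{d-1+\delta}\epsilon^{1-\delta}\big)^n$ — the exponent $1-\delta$ carrying the unavoidable box-counting loss $\delta$ in $\mathcal{N}(\partial N,\cdot)$ — and it has to stay below $\Vol(P)$, which is only $\gtrsim\|\underline{Jg}\|^n$; this single exponential inequality is exactly what the formula for $\epsilon$ solves.
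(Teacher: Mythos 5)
Your argument is correct and follows essentially the same route as the paper: both control a boundary contribution via the box-counting estimate $\mathcal{N}(\partial N,\zeta)\lesssim\zeta^{-(d-1+\delta)}$ pushed forward through the $\|Dg\|^n$-Lipschitz branch generating $P$, control an interior contribution by a disjointness/packing argument with the doubling inequality (\ref{volume.diameter.constants.2}), and absorb the boundary term into $\Vol(P)$ using $\Vol(P)\ge\|\underline{Jg}\|^n\Vol(N)$ together with the defining inequality $\|Dg\|^{d-1+\delta}\epsilon^{1-\delta}\le\|\underline{Jg}\|$. The only real difference is that you start from a single maximal $2\epsilon^n$-separated set of $P$ and split it by distance to $\partial P$, whereas the paper explicitly assembles the covering from pushed-forward border balls plus a maximal packing of the complement of the border neighbourhood $W_n(P)$; this is immaterial for the lemma as stated, but note that the paper's specific construction is what later yields the pairwise-disjoint reduced family (\ref{reduced.family}) and the separation property (\ref{separation.border}), so those would have to be re-derived from your covering if you wanted to continue with it.
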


\begin{proof}
We will choose $0 < \epsilon < 1$ after establishing some conditions necessary to satisfy the required property in the statement of the lemma. We first recall that
$$
\dim_B(\partial{N}) = \lim_{\rho \to 0^+} \dfrac{\log\mathcal{N}(\partial{N},\rho)}{\log(1/\rho)} = d-1
$$
is the box-counting dimension of the border. Let any $0 < \delta < 1$, then
\begin{equation}\label{defining.d_0.delta}
\dim_B(\partial{N}) < d_0 := d - 1 + \delta < d
\end{equation} 
and there exists $\rho_0 > 0$ and $D_0 > 0$ such that:
$$
\forall \ 0 < \rho < \rho_0: \quad \partial{N} \quad\text{\textit{can be covered by at most}}\quad D_0\rho^{-d_0} \quad\text{\textit{balls of radius}}\quad \rho.
$$
Moreover, we choose $0 < \rho_0 < 1$ sufficiently small such that $\Lambda$ is contained in the complement of every covering of the border $\partial{N}$ by 
$\rho_0$-balls
$$
\Lambda \subset N - \bigcup_iB(z_i,\rho_0).  
$$
This is possible by the border condition on the construction. Our first condition to define $\epsilon$ is the following:
$$
\|Dg\|^{-n}2\epsilon^n < \rho_0 \quad\text{for sufficiently large $n$.}
$$
This can be proved whenever 
$$
0 < \|Dg\|^{-1}\epsilon < 1 \quad\text{and}\quad n \geq N_0(\epsilon) = \dfrac{\log(\rho_0/2)}{\log(\|Dg\|^{-1}\epsilon)}. 
$$
As we can seen the definition of $N_0 = N_0(\epsilon)$ depends on the choice of $\epsilon$ whenever $0 < \|Dg\|^{-1}\epsilon < 1$. We will fix $\epsilon > 0$ and $N_0$ at the end of the proof.

For every $n \geq N_0$, the border $\partial{N}$ can be covered by no more than $D_0(\|Dg\|^{-n}2\epsilon^n)^{-d_0}$ balls of radius $r = \|Dg\|^{-n}2\epsilon^n$. We will choose such a covering as follows: let $\{B(x_i,\|Dg\|^{-n}\epsilon^n)\}$ be maximal family of pairwise disjoint open $\|Dg\|^{-n}\epsilon^n$-balls covering the border. Then the open $\|Dg\|^{-n}2\epsilon^n$-balls covers $\partial{N}$ and we can therefore extract a minimal subcovering which will be denotated $\mathcal{B}_n(\partial{N}) = \{B(x_i,\|Dg\|^{-n}2\epsilon^n)\}$. For every $n > 0$, the open set 
$$
W_n(\partial{P}) = \bigcup_iB(g^n_P(x_i),2\epsilon^n) \quad\text{covers $\partial{P}$,}
$$
where $g^n_P$ is the branch of the IFS generating the $n$-atom $P \in \wp_n$. Moreover, for $n \geq N_0$ it has volume:
$$
\Vol(W_n(\partial{P})) \leq D_0A_1(\|Dg\|^{-n}2\epsilon^n)^{-d_0}(2\epsilon^n)^d
$$
due to (\ref{volume.diameter.constants.1}) and our bound on the cardinality of 
$\mathcal{B}_n(\partial{N})$. We can even suppose that,
$$
\Lambda \cap P \subset P- W_n(\partial{P}) \quad\text{for every}\quad n \geq N_0,
$$
since $\Lambda \subset N - \bigcup_iB(x_i,\rho_0)$ and $B_i \subset \bigcup_iB(x_i,\rho_0)$ for every $B_i \in \mathcal{B}_n(\partial{N})$ by our choice of $\epsilon$ and $N_0$.

Now we choose a maximal family of closed balls $\overline{B}(z_j,\epsilon^n)$ contained in $P-W_n(\partial{P})$ with pairwise disjoint interiors. Then, $\{B(z_j,2\epsilon^n)\}$ covers $P-W_n(\partial{P})$. 
Let 
$$
{\mathcal B}_n(P) = \{B(g^n_{P}(x_i),2\epsilon^n)\} \cup \{B(z_j,2\epsilon^n)\}.
$$
be the covering so constructed. 

By (\ref{volume.diameter.constants.2}),
$$
\sum_j\Vol(B(z_j,2\epsilon^n)) \leq \dfrac{A_12^d}{A_0}\sum_j\Vol(B(z_j,\epsilon^n)) \leq \dfrac{A_12^d}{A_0}\Vol(P), 
$$
since $\overline{B}(z_j,\epsilon^n) \subset P-W_n$ have pairwise disjoint interiors. Then,
\begin{eqnarray*}
\sum\limits_{B \in {\mathcal B}_n(P)}\Vol(B) & \leq & D_0A_1(\|Dg\|^{-n}2\epsilon^n)^{-d_0}(2\epsilon^n)^d + \dfrac{A_12^d}{A_0}\Vol(P)\\
                                        &  =   & D_0A_12^{d-d_0}(\|Dg\|^{d_0}\epsilon^{d-d_0})^n + \dfrac{A_12^d}{A_0}\Vol(P) \label{volume.total.1}.
\end{eqnarray*}
On the other hand,
$$
 \|\underline{Jg}\|^{n} \leq \dfrac{\Vol(P)}{\Vol(N)}.
$$
Indeed, $\Vol(P) \geq \inf_{x \in N}|Jg^n_{P}(x)|\Vol(N) \geq  \|\underline{Jg}\|^{n}\Vol(N)$. If 
$\epsilon > 0$ is chosen, such that:
\begin{equation}\label{defining.epsilon.1}
\|Dg\|^{d_0}\epsilon^{d-d_0} \leq \|\underline{Jg}\| 
\end{equation}
then, one have,
$$
\sum\limits_{B \in {\mathcal B}_n(P)}\Vol(B) \leq \left(\dfrac{A_12^{d-d_0}}{\Vol(N)} + \dfrac{A_12^d}{A_0}\right)\Vol(P),
$$
and therefore,
$$
C_0 = \dfrac{D_0A_12^{d-d_0}}{\Vol(N)} + \dfrac{A_12^d}{A_0}
$$
define a constant as required in (\ref{bound.1}). The solution to the inequality (\ref{defining.epsilon}) is:
$$
0 < \epsilon \leq \left(\dfrac{\|\underline{Jg}\|}{\ \ \|Dg\|^{d_0}}\right)^{\frac{1}{d-d_0}}.
$$
So, any $\epsilon > 0$ satisfying this inequality will work. Now, notice that $\|\underline{Jg}\| \leq \|Dg\|^d$. Therefore, using the definition of $d_0$ in (\ref{defining.d_0.delta})
$$
\left(\dfrac{\|\underline{Jg}\|}{\ \ \|Dg\|^{d_0}}\right)^{\frac{1}{d-d_0}} = \left(\dfrac{\|\underline{Jg}\|}{\quad\ \ \|Dg\|^{d - 1 + \delta}}\right)^{\frac{1}{1-\delta}} \leq (\|Dg\|^{1 - \delta})^{\frac{1}{1-\delta}} = \|Dg\|.
$$
Thus, given any $0 < \delta < 1$ we can define $\epsilon = \epsilon(\delta)$ as:
\begin{equation}\label{defining.epsilon(delta)}
\epsilon = (1-\delta)\left(\dfrac{\|\underline{Jg}\|}{\ \ \|Dg\|^{d - 1 + \delta}}\right)^{\frac{1}{1 - \delta}}
\end{equation}
where the factor $1-\delta$ its been chosen in order to adjust $\epsilon$ along the argument. With this choice of $\epsilon$ we have that $0 < \|Dg\|^{-1}\epsilon \leq (1-\delta) < 1$, for every $0 < \delta < 1$. Let $0 < \delta < 1$ arbitrary and $\epsilon = \epsilon(\delta)$ defined as (\ref{defining.epsilon(delta)}). For inequality (\ref{bound.1}) to hold one need,
 $$
  n \geq \dfrac{\log\rho_0}{\log(\|Dg\|^{-1}\epsilon)}.
  $$
As $\|Dg\|^{-1}\epsilon < 1-\delta$, one choose $n \geq N_0$, where $N_0 = N_0(\delta)$,
 \begin{equation}\label{defining.N_0}
 N_0 := \dfrac{\log\rho_0}{\log(1-\delta)} > \dfrac{\log\rho_0}{\log(\|Dg\|^{-1}\epsilon)}. 
 \end{equation}
\end{proof}

Let us collect some facts concerning the collection of coverings ${\mathcal B}_n(P)$ so constructed which will be used later:
\begin{enumerate}
 \item let ${\mathcal B}_n(\partial{P})$ be the collection of open $2\epsilon^n$-balls covering the border $\partial{P}$, then
 \begin{equation}\label{separation.border}
  \Lambda \subset P - W_n(\partial{P}), \quad\text{where}\quad W_n(\partial{P}) = \bigcup_{B \in {\mathcal B}_n(\partial{P})}B;
 \end{equation}
 \item the members of the \textit{reduced family}
 \begin{equation}\label{reduced.family}
  {\mathcal B}^*_n(P) = \{B(g^n_{P}(x_i),\epsilon^n/2)\} \cup \{B(z_j,\epsilon^n/2)\}
 \end{equation}
are pairwise disjoint:
\begin{itemize}
 \item $B(z_j,\epsilon^n) \cap B(z_{j'},\epsilon^n) = \emptyset$ for $j \neq j'$, by construction;
 \item $B(g^n_{P}(x_i),\epsilon^n/2) \cap B(z_j,\epsilon^n) = \emptyset$ since $B(g^n_{P}(x_i),\epsilon^n/2) \subset W_n$ and $B(z_j,\epsilon^n) \subset P - W_n$ and 
 \item suppose, by contradiction, that $x \in B(g^n_{P}(x_i),\epsilon^n/2) \cap B(g^n_{P}(x_{i'}),\epsilon^n/2) \neq \emptyset$. Then, $d(g^n_{P}(x_i), g^n_{P}(x_{i'})) < \epsilon^n$ and then $d(x_i,x_{i'}) < \|Dg\|^{-n}\epsilon_n$. But $B(x_i,\|Dg\|^{-n}\epsilon_n) \cap B(x_{i'},\|Dg\|^{-n}\epsilon_n) = \emptyset$, by construction so $B(g^n_{P}(x_i),\epsilon^n/2) \cap B(g^n_{P}(x_{i'}),\epsilon^n/2) = \emptyset$. 
\end{itemize}
\end{enumerate}

Our next lemma uses the well known \textit{Bounded Distortion Property}:
\ 
\\
\\
\textit{There exists a constant $C > 0$ such that, for every $n > 0$ and for every $x,y \in P$ and every 
$P \in \wp_n$,
\begin{equation}\label{bounded.volume.distorsion}
 \left|\dfrac{Jf^n(x)}{Jf^n(y)} - 1\right| \leq Cd(f^n(x),f^n(y))
\end{equation}
}
\ 
\\
\begin{lemma}\label{lemma.2}
There exist a constant $C_1 > 1$ such that, for every $x \in \Lambda$:
\begin{equation}\label{bounds.2}
C_1^{-1}e^{-\lambda_1n} \leq \Vol(\wp_n(x)) \leq C_1e^{-\lambda_0n} \quad\forall \ n > 0,
\end{equation} 
where, $0 < \lambda_0 < \lambda_1$
$$
\lambda_0 = \log\left(\inf_{x \in \Lambda}Jf(x)\right) \quad\text{and}\quad \lambda_1 = \log\left(\inf_{x \in \Lambda}Jf(x)\right)
$$
\end{lemma}

\begin{proof}
As $f^n(P) = N$, for every $P \in \wp_n$ then, by the bounded distortion property, there exists a constant $C_1 > 1$, only depending on the volume distortion of $f$ and the volume of $N$, such that
$$
C_1^{-1} \leq \Vol(\wp_n(x))Jf^n(x) \leq C_1,
$$
for every $x \in \Lambda$ and for every $n > 0$. The estimate (\ref{bounds.2}) follows inmediatly.
\end{proof}

\begin{lemma}\label{lemma.3}
Let $s = \alpha$ be the solution to the Bowen equation $P(f|\Lambda,-s\log(Jf)) = 0$. Then,
\begin{equation}\label{bounds.volume.partitions}
0 < \inf\limits_{n > 0}\sum\limits_{P \in \wp_n}\Vol(P)^{\alpha} \leq \sup\limits_{n > 0}\sum\limits_{P \in \wp_n}\Vol(P)^{\alpha} < +\infty.
\end{equation}
\end{lemma}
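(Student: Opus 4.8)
The plan is to read off (\ref{bounds.volume.partitions}) from the volume lemma together with the Gibbs property of the equilibrium state of the geometric potential. Write $\phi_\alpha = -\alpha\log Jf$; this is a H\"older continuous potential on $\Lambda$ because $f$ is $C^r$ with $r>1$ and $Jf$ is bounded away from $0$ and $\infty$. By the chain rule $Jf^n(x) = \prod_{k=0}^{n-1}Jf(f^k x)$, so that $S_n\phi_\alpha(x) = -\alpha\log Jf^n(x)$ for every $x$ and every $n$. Combining this with the bounded distortion estimate $C_1^{-1} \le \Vol(\wp_n(x))\,Jf^n(x) \le C_1$ established in the proof of Lemma~\ref{lemma.2} gives, for every $x \in \Lambda$ and every $n>0$,
$$
C_1^{-\alpha}\,e^{S_n\phi_\alpha(x)} \;\le\; \Vol(\wp_n(x))^{\alpha} \;\le\; C_1^{\alpha}\,e^{S_n\phi_\alpha(x)},
$$
with a constant independent of $n$ and of the atom $\wp_n(x)$.

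Next I would invoke the thermodynamic formalism of $C^r$ $(r>1)$ uniformly hyperbolic repellers: the H\"older potential $\phi_\alpha$ admits a unique equilibrium state $\mu_\alpha$, which is a Gibbs measure, so (\ref{gibbs.property}) provides a constant $G>1$ with
$$
G^{-1} \;\le\; \frac{\mu_\alpha(\wp_n(x))}{\exp\bigl(-nP(\phi_\alpha) + S_n\phi_\alpha(x)\bigr)} \;\le\; G
$$
for all $x\in\Lambda$ and $n>0$. Since by hypothesis $s=\alpha$ solves the Bowen equation (\ref{bowen.equation}), we have $P(\phi_\alpha) = P(f|\Lambda,-\alpha\log Jf) = 0$, so the normalizing factor disappears and, comparing with the previous display, there is a constant $B = B(C_1,G,\alpha) > 1$ such that $B^{-1}\,\Vol(P)^{\alpha} \le \mu_\alpha(P) \le B\,\Vol(P)^{\alpha}$ for every $P \in \wp$.

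Finally, by the open condition the $s^n$ atoms of generation $n$ are pairwise disjoint and their union contains $\Lambda$, hence $\{P\cap\Lambda : P\in\wp_n\}$ is a partition of $\Lambda$ and, since $\mu_\alpha$ is supported on $\Lambda$, $\sum_{P\in\wp_n}\mu_\alpha(P) = \mu_\alpha(\Lambda) = 1$. Summing the last double inequality over $P \in \wp_n$ then yields $B^{-1} \le \sum_{P\in\wp_n}\Vol(P)^{\alpha} \le B$ for every $n>0$, which is exactly (\ref{bounds.volume.partitions}). The only delicate point is the uniformity in $n$ of the distortion constant $C_1$, but that is precisely what the bounded distortion property (\ref{bounded.volume.distorsion}) guarantees, since $f^n$ carries each $P\in\wp_n$ onto $N$ with distortion controlled by $C\,\diam(N)$ independently of $n$; everything else is a direct application of the classical Gibbs/equilibrium-state theory recalled in Section~\ref{section.setting}, so I do not expect a serious obstacle.
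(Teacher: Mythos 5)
Your argument is correct and is essentially the paper's own proof: the paper likewise combines the bounded distortion (volume lemma) estimate with the Gibbs property of the equilibrium state $\mu_{\alpha}$ of $-\alpha\log Jf$ at zero pressure to get $\mu_{\alpha}(P)\asymp \Vol(P)^{\alpha}$ uniformly over $P\in\wp$, and then sums over the disjoint atoms of generation $n$ using $\mu_{\alpha}(\Lambda)=1$. You have simply written out the ``standard argument'' the paper leaves implicit, so there is nothing to add.
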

\begin{proof}
 Let $\mu_{\alpha}$ be the equilibrium state for the H\"older continuous potential $\phi = -\alpha\log(Jf)$. This is a Gibbs measure. Then by the bounded distortion property and Gibbs property, we can find a constant $C_2 > 1$ such that:
 $$
 C_2^{-1} \leq \dfrac{\mu_{\alpha}(\wp_n(x))}{\Vol(\wp_n(x))^{\alpha}} \leq C_2
 $$
 for every $x \in \Lambda$ and every $n > 0$. Then (\ref{bounds.volume.partitions}) follows immediately by an standard argument, since $\wp_n$ is a decomposition into disjoint pieces at the $n$-stage of the construction.
\end{proof}

\begin{lemma}\label{lemma.d_B.upper bound}
Let $0 < \alpha \leq 1$ be the solution to the Bowen equation, $0 < \delta < 1$ and $0 < \overline{\beta}^* < 1$ be the solution to the equation
\begin{equation}\label{defining.d_B.upper.beta}
e^{\lambda_0(1-\alpha)}\epsilon^{d(1-\overline{\beta}^*)} = 1,
\end{equation}
where $\epsilon = \epsilon(\delta)$ is defined in (\ref{defining.epsilon(delta)}). Then,
\begin{equation}\label{d_B.upper.bound}
\underline{\dim_B}(\Lambda) \leq \overline{\dim_B}(\Lambda) \leq d\overline{\beta}^*.
\end{equation}
\end{lemma}

\begin{proof} 

By lemma \ref{lemma.1} there exists a uniform constant $C_0 > 0$ and a positive $\epsilon > 0$ such that for every $n > 0$ and for every atom of generation $n$, $P \in \wp_n$, there exits a finite covering 
${\mathcal B}_n(P)$ by $2\epsilon^n$-balls $B_i = B(x_i,2\epsilon^n))$ such that 
$\sum_{B \in {\mathcal B}_n(P)}\Vol(B) \leq C_0\Vol(P)$. We define 
$$
{\mathcal B}_n = \{B: B \in {\mathcal B}_n(P), \ P \in \wp_n\}
$$ 
the open covering by these $2\epsilon^n$-balls of $\Lambda_n = \bigcup_{P \in \wp_n}P$, the $n$-th stage in the construction of the limit $\Lambda$.

Let $0 < \delta < 1$ arbitrary and $\epsilon = \epsilon(\delta)$, as defined in (\ref{defining.epsilon(delta)}). We will prove that, for every $\beta > 0$, there exists a constant $C_3(\beta) > 0$ such that
\begin{equation}\label{first.fundamental.inequality}
\sum\limits_{B \in {\mathcal B}_n}\Vol(B)^{\beta} \leq 
C_3(\beta)(e^{\lambda_0(\alpha-1)}\epsilon^{d(\beta-1)})^n, \quad\forall \ n \geq N_0(\delta),
\end{equation}
where $N_0(\delta)$ was defined in (\ref{defining.N_0}). To prove this let $\beta > 0$. Then:
$$
\sum\limits_{B \in {\mathcal B}_n(P)}\Vol(B)^{\beta} \leq \sum\limits_{B \in {\mathcal B}_n(P)}\Vol(B)\max_{B \in {\mathcal B}_n(P)}\Vol(B)^{\beta-1}.
$$
Hence,
$$
\sum\limits_{B \in {\mathcal B}_n(P)}\Vol(B)^{\beta} \leq C_0\Vol(P)[A_1(2\epsilon^n)^d]^{\beta-1}, 
$$
for every $n \geq N_0(\epsilon)$, by (\ref{bound.1}). Therefore,
$$
\sum\limits_{B \in {\mathcal B}_n}\Vol(B)^{\beta} \leq C_0[A_12^d]^{\beta-1}[\epsilon^{d(\beta-1)}]^n\sum\limits_{P \in \wp_n}\Vol(P).
$$
On the other hand, by (\ref{bound.1}) in lemma \ref{lemma.1},
$$
\begin{array}{ccc}
\sum\limits_{P \in \wp_n}\Vol(P) & \leq & \sum\limits_{P \in \wp_n}\Vol(P)^{\alpha}\max_{P \in \wp_n}\Vol(P)^{1-\alpha}\\
                                 & \leq & C_1^{1-\alpha}e^{-n\lambda_0(1-\alpha)}\sum\limits_{P \in \wp_n}\Vol(P)^{\alpha}.
\end{array}
$$
for every $n > 0$, by (\ref{bounds.2}). Thus
$$
\sum\limits_{B \in {\mathcal B}_n}\Vol(B)^{\beta} \leq C_3(\beta)(e^{\lambda_0(\alpha-1)}\epsilon^{d(\beta-1)})^n,
$$
for every $n \geq N_0(\delta)$, where 
$$
C_3(\beta) = C_0C_1^{1-\alpha}[A_12^d]^{\beta-1}\sup_{n > 0}\sum\limits_{P \in \wp_n}\Vol(P)^{\alpha},
$$
does not depend on $n > 0$. Now, ${\overline\theta}(\beta) = e^{\lambda_0(\alpha-1)}\epsilon^{d(\beta-1)}$ is strictly decreasing in $\beta$, therefore $e^{\lambda_0(\alpha-1)}\epsilon^{d(\beta-1)} < 1$ for $\beta > \overline{\beta}^*$, where $\overline{\beta}^*$ is the unique solution to the equation
$$
e^{\lambda_0(\alpha-1)}\epsilon^{d(\overline{\beta}^*-1)} = e^{\lambda_0(1-\alpha)}\epsilon^{d(1-\overline{\beta}^*)} = 1,
$$
In particular, $\sum_{B \in {\mathcal B}_n}\Vol(B)^{\beta} \to 0^+$ exponentially as $n \to +\infty$ for 
$\beta > \overline{\beta}^*$ as $e^{\lambda_0(\alpha-1)}\epsilon^{d(\beta-1)} < 1$. Let $\zeta > 0$ and
$$
B_{\beta,\zeta}(\Lambda) = \inf_{\mathcal{U}}\sum_{B_i \in \mathcal{B}(\zeta)}\diam(B_i)^{\beta},
$$
where infimum is taken over the whole family of at most countable coverings $\mathcal{B}(\zeta) = \{B_i(\zeta)\}$ of $\Lambda$ by $\zeta$-balls. Then, if $2\epsilon^n < \zeta$, 
\begin{eqnarray*}
B_{d\beta,\zeta}(\Lambda) & \leq & \sum_{B \in {\mathcal B}_n}\diam(B)^{d\beta}
 =  \sum_{B \in {\mathcal B}_n}(4\epsilon^n)^{d\beta}\\
& \leq & A_0^{-1}2^{d\beta}\sum_{B \in {\mathcal B}_n}\Vol(B)^\beta
 \leq  A_0^{-1}2^{d\beta}C_3(\beta)[e^{\lambda_0(\alpha-1)}\epsilon^{d(\beta-1)}]^n
\end{eqnarray*}
Then, if $\beta > \overline{\beta}^*$,
\begin{eqnarray*}
\overline{B}_{d\beta}(\Lambda) & = & \limsup_{\zeta \to 0^+}B_{d\beta,\zeta}(\Lambda)
 \leq  \limsup_{n \to +\infty}C_4(\beta)[e^{\lambda_0(\alpha-1)}\epsilon^{d(\beta-1)}]^n = 0,
\end{eqnarray*}
where, $C_4(\beta) = A_0^{-1}2^{d\beta}C_3(\beta)$. Thus,
$$
\overline{\dim}_B(\Lambda) = \inf\{a > 0: \overline{B}_{a}(\Lambda) = 0 \} \leq d\overline{\beta}^*.
$$
Therefore,
$$
\underline{\dim_B}(\Lambda) \leq \overline{\dim_B}(\Lambda) \leq d\overline{\beta}^*.
$$
so proving (\ref{d_B.upper.bound}).  
\end{proof}

\begin{proof}[Proof of the upper bound in (\ref{main.inequality}), Theorem \ref{main.thm.1}]
In lemma \ref{lemma.d_B.upper bound} we fix an arbitrary $0 < \delta < 1$ and get that 
$$
\dH(\Lambda) \leq \underline{\dim_B}(\Lambda) \leq \overline{\dim_B}(\Lambda) \leq d\overline{\beta}^*,
$$
where $\overline{\beta}^*$ is the solution to the equation $e^{\lambda_0(1-\alpha)}\epsilon^{d(1-\overline{\beta}^*)} = 1$, with $\epsilon = \epsilon(\delta)$. Solving for $d\overline{\beta}^*$ in this equation we get
$$
d\overline{\beta}^* = d + \dfrac{\lambda_0(1-\alpha)}{\log\epsilon(\delta)}.
$$
This inequality holds for every $0 < \delta < 1$. Then, passing to the limit as $\delta \to 0^+$ we get that
$$
\dH(\Lambda) \leq \underline{\dim}_B(\Lambda) \leq \overline{\dim}_B(\Lambda) \leq d + \dfrac{\lambda_0(1-\alpha)}{\log\epsilon} = U
$$
where $\epsilon = \epsilon(0)$ is defined in (\ref{defining.epsilon}). This concludes the proof of the upper bound in Theorem \ref{main.thm.1}. 
\end{proof}

\section{A lower bound for the Hausdorff dimension}\label{section.dH.estimations}
To bound the Hausdorff dimension from below we will bound sums $\sum_{B \in \mathcal{B}_0}\Vol(B)^{\beta}$, where ${\mathcal B}_0$ is an arbitrary non uniform covering of $\Lambda$ by open balls of radius $ < \zeta$. By non uniform we mean that the radius of these balls can be variables. \textit{We shall call this type of coverings a 
$\zeta$-covering}, for short. Furthermore, we can assume without loss of generality that ${\mathcal B}_0$ is finite.

By lemma \ref{lemma.1}, for every $P \in \wp_n$ there exists a finite open cover 
$$
{\mathcal B}_n(P) = \{B(x_i(P),2\epsilon^n), B(y_j(P),2\epsilon^n)\}
$$ 
such that $\sum_{B \in {\mathcal B}_n(P)}\Vol(B) \leq C_0\Vol(P)$ for a suitable constant $C_0 > 0$.

The following lemma says that, given given an arbitrary $\zeta$-covering ${\mathcal B}_0$ 
the sum $\sum_{B^0_k \in {\mathcal B}_0}\Vol(B^0_k)^{\beta}$ is bounded from below for sufficiently large $n$ by $\sum_{B \in {\mathcal B}_n}\Vol(B)^{\beta}$ up to a multiplicative constant depending on $n$, $\beta$, $\zeta$, $d$ and the characteristic scale $\epsilon > 0$. 

\begin{lemma}\label{lemma.second.fundamental.inequality}
Given an arbitrary $\zeta$-covering ${\mathcal B}_0$, there exists a large $N_1(\mathcal{B}_0) > 0$ and 
$C_5(\beta) > 0$ such that
\begin{equation}\label{second.fundamental.inequality}
\sum\limits_{B^0_k \in {\mathcal B}_0}\Vol(B^0_k)^{\beta} \geq C_5(\beta)[\epsilon^{nd(1-\beta)}\zeta^{d(\beta - 1)}]\sum\limits_{B \in {\mathcal B}_n}\Vol(B)^{\beta}
\end{equation}
for every $n \geq N_1(\mathcal{B}_0)$. The constant $C_5(\beta)$ depends on $\beta$ but not on $n$ neither the covering ${\mathcal B}_0$.
\end{lemma}

\begin{proof}[Proof of lemma \ref{lemma.second.fundamental.inequality}: first part]
Let $\lambda({\mathcal B}_0) < \zeta$ be the Lebesgue number of ${\mathcal B}_0$. We can choose 
$N_2$ large enough such that $2\epsilon^n < \lambda({\mathcal B}_0) < \zeta$ for every $n > N_2$. For this it is sufficient to define
\begin{equation}\label{defining.N_1}
N_1(\mathcal{B}_0) = \dfrac{\log\lambda({\mathcal B}_0)}{\log\epsilon}
\end{equation}

Recall that ${\mathcal B}_n = \{ B \in {\mathcal B}_n(P): P \in \wp_n\}$ is a $2\epsilon^n$-covering of $\Lambda_n$, the $n$-th stage of the geometric construction. The elements of the \textit{reduced family} 
$$
{\mathcal B}^*_n(P) = \{B(x_i(P),\epsilon^n/2), B(y_j(P),\epsilon^n/2)\}
$$
are pairwise disjoint, by construction. As $n \geq N_2$, we can use the Lebesgue number property to arrange the members $B \in {\mathcal B}_n$ as follows:
$$
\begin{array}{ccc}
{\mathcal B}_n(B^0_1) & := & \{ B \in {\mathcal B}_n : B \subset B^0_1\}\\
{\mathcal B}_n(B^0_2) & := & \{ B \in {\mathcal B}_n - {\mathcal B}_n(B^0_1): B \subset B^0_2\}\\
                      & \vdots & \\
{\mathcal B}_n(B^0_l) & := &   \{ B \in {\mathcal B}_n - \bigcup_{l' < l}{\mathcal B}_n(B^0_{l'}): B \subset B^0_l\}\\     
\end{array}
$$
For each $B^0_k \in {\mathcal B}_0$ the family ${\mathcal B}_n(B^0_k)$ is made of collections of open balls in ${\mathcal B}_n(P)$, where $P \cap B^0_k \neq \emptyset$. 

We would like to compare the volume covered by $B \in {\mathcal B}_n(B^0_k)$ and $\Vol(B^0_k)$. We're looking for an inequality of the type
$$
\sum_{B \in {\mathcal B}_n(B^0_k)}\Vol(B) \leq C\Vol(B^0_k),
$$
for a suitable uniform $C > 0$. One idea is to use the inequality (\ref{volume.diameter.constants.2}) relating 
$\Vol(B(x,r))$ with $\Vol(B(x,Cr))$ and the fact that the reduced family ${\mathcal B}^*_n(P)$ is disjoint for each $P \in \wp_n$. Then, 
$$
 \sum_{B \in {\mathcal B}_n(B^0_k)}\Vol(B) \leq \dfrac{A_14^d}{A_0}
 \sum_{B \in {\mathcal B}^*_n(B^0_k)}\Vol(B) \leq \dfrac{A_14^d}{A_0}\Vol(B^0_k)
$$
where
$$
{\mathcal B}^*_n(B^0_k) = \{B(1/4): B \in {\mathcal B}_n(B^0_k)\}
$$
and 
$$
B(C) = B(x,Cr), \quad\text{for}\quad B = B(x,r).
$$
with $\Vol(B(C)) \leq \frac{A_1C^d}{A_0}\Vol(B)$. However, even though the $\epsilon^n/2$-balls of the reduced covering ${\mathcal B}^*_n(P)$ are disjoint for every $P$, for different atoms $P \neq Q \subset \wp_n$, there might be overlapping balls $B \cap B' \neq \emptyset$ with $B \in {\mathcal B}^*_n(P)$ and $B' \in {\mathcal B}^*_n(Q)$. The following lemma take care of these overlappings to get the claimed inequality.
\end{proof}

\begin{lemma}\label{disjoining.border.overlaps}
There exists a constant $D > 1$, only depending on the topological dimension of $\partial{N}$ with the following property: for every $B^0_k \in {\mathcal B}_0$ and for every 
$n \geq N_1(\mathcal{B}_0)$
\begin{equation}\label{volume.5}
 \sum_{B \in {\mathcal B}_n(B^0_k)}\Vol(B) \leq D\Vol(B^0_k).
\end{equation}
\end{lemma} 

\begin{proof}
To prove (\ref{volume.5}) we will show that for every $B^0_k \in {\mathcal B}_0$ and for every 
$n \geq N_1(\mathcal{B}_0)$ there exists a collection of disjoint $\epsilon^n/2$-balls 
$$
\{B_i\} \subset {\mathcal B}^{*}_n(B^0_k)
$$
such that
\begin{equation}\label{volume.6}
 \sum_{B \in {\mathcal B}_n(B^0_k)}\Vol(B) \leq D\sum_i\Vol(B_i).
\end{equation}

For any given $P \in \wp_n$, the $\epsilon^n/2$-balls $B(y_j(P),\epsilon^n/2)$ contained in $P - W_n(\partial{P})$, are disjoint. Moreover, by the open condition, for every $Q \neq P$, the corresponding interior balls $B(y_j(P),\epsilon^n/2)$ and  $B(y_k(Q),\epsilon^n/2)$ are pairwise disjoint, so the unique overlappings between elements in ${\mathcal B}^*_n(P)$ for different atoms $P \in \wp_n$ can occur near the border. As $\mathcal{B}_n(\partial{P})$ is a minimal $2\epsilon^n$-covering of the border of $P \in \wp_n$ in the $n$-th stage in the construction and the open neighborhoods $W_n(\partial{P})$ are disjoint by the open condition then 
$$
\mathcal{B}_n(\partial\wp_n) = \bigcup_{P \in \wp_n}\mathcal{B}_n(\partial{P})
$$
is a minimal covering of the border of the $n$-th stage in the construction, $\partial\wp_n = \bigcup_{P \wp_n}\partial{P}$. Then, as $\partial{\wp_n}$ is a finite collection of pieces homeomorphic to $\partial{N}$, we can find a constant $\tau > 1$, only depending on the topological dimension of $\partial{N}$ such that there are at most $\tau$-overlappings in the open covering $\mathcal{B}_n(\partial\wp_n)$. In particular, for every open $\epsilon^n/2$-ball $B_j \in \mathcal{B}^*_n(B^0_k)$ in the reduced family, we have
\begin{equation}\label{multiplicity}
\#\{B_i \in \mathcal{B}^*_n(B^0_k): B_i \cap B_j \neq \emptyset\} \leq \tau.
\end{equation}

Now, fix $B^0_k \in \mathcal{B}_0$. To construct the collection $\{B_i\}$ (\ref{volume.6}) we proceed as follows. First, we separate the $\epsilon^n/2$-balls in $\mathcal{B}^*_n(B^0_k)$ into two classes: 
\begin{itemize}
 \item $\mathcal{J}_0$ is a maximal family of $\epsilon^n/2$-balls $B \in \mathcal{B}^*_n(B^0_k)$ which are pairwise disjoint;
 \item $\mathcal{J}_1$ contains all the $\epsilon^n/2$-balls in $\mathcal{B}^*_n(B^0_k)$ having overlappings.
\end{itemize}

Each  $B \in \mathcal{J}_1$  must belongs to $\mathcal{B}^*_n(\partial\wp_n)$, the reduced family of $\epsilon^n/2$-balls associated to the covering of the border $\partial\wp_n$, as we argued before. One can prove without difficulty that
$$
\bigcup\{B_i \in \mathcal{B}^*_n(\partial\wp_n)\ : \ B_i \cap B_j \neq \emptyset\} \subset B_j(4). 
$$
Then, by our argument with the topological dimension and using the inequalities (\ref{volume.diameter.constants.2}) we have that
$$
\sum\limits_{B_i \cap B_j \neq \emptyset}\Vol(B_i)  \leq  \dfrac{A_1\tau 4^d}{A_0}\Vol(B_j))
$$
using the estimate (\ref{multiplicity}) and
$$
B_i \subset B_j(4) \quad\Rightarrow\quad \Vol(B_i) \leq \dfrac{A_1\tau 4^d}{A_0}\Vol(B_j).
$$
Then, pick $B_1 \in \mathcal{J}_1$ and remove all its overlappings. Take $B_2$ outside the overlappings of $B_1$, that is,
$$
B_2 \in \mathcal{J}_1 - \{B_i \in \mathcal{J}_1 \ : \ B_i \cap B_1 \neq \emptyset\}
$$
and remove the overlappings associated to $B_2$ and so on. After finitely many steps we get a maximal collection $\mathcal{J}^0_1$ of disjoint open $\epsilon^n/2$ balls $B_i \in \mathcal{J}_1$, obtained by removing their overlappings. Complete this collection with $B \in \mathcal{J}_0$ to get a maximal disjoint collection $\{B_i\}$ of $\epsilon^n/2$-balls in ${\mathcal B}^*_n(B^0_k)$ such that
\begin{eqnarray*}
 \sum\limits_{B \in {\mathcal B}_n(B^0_k)}\Vol(B) & \leq & \dfrac{A_1\tau 4^d}{A_0}\sum\limits_{B \in {\mathcal B}^*_n(B^0_k)}\Vol(B)\\
 & = & \dfrac{A_1\tau 4^d}{A_0}\left(\sum\limits_{B \in \mathcal{J}_0}\Vol(B) + \sum\limits_{B \in \mathcal{J}_1}\Vol(B)\right)\\
 & \leq & \dfrac{A_1\tau 4^d}{A_0}\left(\sum\limits_{B_i \in \mathcal{J}_0}\Vol(B) + 
 \dfrac{A_1\tau 4^d}{A_0}\sum\limits_{B_i \in \mathcal{J}^0_1}\Vol(B_i)\right)\\
 & \leq & \dfrac{A_1\tau 4^d}{A_0}\sum\limits_{B_i \in \mathcal{J}_0}\Vol(B) + 
 \left(\dfrac{A_1\tau 4^d}{A_0}\right)^2\sum\limits_{B_i \in \mathcal{J}^0_1}\Vol(B_i)\\
 & \leq & \max\left\{\dfrac{A_1\tau 4^d}{A_0}, \left(\dfrac{A_1\tau 4^d}{A_0}\right)^2\right\}\sum_i\Vol(B_i)\\
 & \leq & D\Vol(B^0_k)
\end{eqnarray*}
where
$$
D = \max\left\{\dfrac{A_1\tau}{A_0}, \left(\dfrac{A_1\tau}{A_0}\right)^2\right\},
$$
which only depends on the topological dimension of $\partial{N}$ and the geometry of the ambient space. 

\end{proof}

Now, we are ready to conclude the proof of lemma \ref{lemma.second.fundamental.inequality}.

\begin{proof}[Proof of lemma \ref{lemma.second.fundamental.inequality}: conclusion]
By lemma \ref{disjoining.border.overlaps} 
$$
\sum\limits_{B \in {\mathcal B}_n(B^0_k)}\Vol(B) \leq  D\Vol(B^0_k),
$$
Hence,
\begin{eqnarray*}
\sum\limits_{B \in {\mathcal B}_n}\Vol(B)^{\beta} & \leq & \max_{B \in {\mathcal B}_n}\Vol(B)^{\beta-1}
\sum\limits_{B \in {\mathcal B}_n}\Vol(B)\\
& = & \max_{B \in {\mathcal B}_n}\Vol(B)^{\beta-1}\sum\limits_{B^0_k \in {\mathcal B}_0}\sum\limits_{B \in {\mathcal B}_n(B^0_k)}\Vol(B)\\
& \leq & [A_12^d\epsilon^{nd}]^{\beta-1}D\sum\limits_{B^0_k \in {\mathcal B}_0}\Vol(B^0_k)\\
& \leq & [A_12^d\epsilon^{nd}]^{\beta-1}D\max_{B^0_k \in {\mathcal B}_0}\Vol(B^0_k)^{1-\beta}\sum\limits_{B^0_k \in {\mathcal B}_0}\Vol(B^0_k)^{\beta}\\
& \leq & [A_12^d\epsilon^{nd}]^{\beta-1}D[A_1\zeta^d]^{1-\beta}\sum\limits_{B^0_k \in {\mathcal B}_0}\Vol(B^0_k)^{\beta}\\
& \leq & [2^{d(\beta-1)}D][\epsilon^{nd(\beta-1)}\zeta^{d(1-\beta)}]\sum\limits_{B^0_k \in {\mathcal B}_0}\Vol(B^0_k)^{\beta}.
\end{eqnarray*} 
Therefore, given a $\zeta$-covering ${\mathcal B}_0$ we have for every $\beta > 0$
$$
\sum\limits_{B^0_k \in {\mathcal B}_0}\Vol(B^0_k)^{\beta} \geq C_5(\beta)[\epsilon^{nd(1-\beta)}\zeta^{d(\beta-1)}]\sum\limits_{B \in {\mathcal B}_n}\Vol(B)^{\beta},
$$
for every $n \geq N_1(\mathcal{B}_0)$, where
$$
C_5(\beta) = (2^{d(\beta-1)}D)^{-1}.
$$
\end{proof}

The covering ${\mathcal B}_n$ by $2\epsilon^n$-balls is not efficient in that there lot of balls not intersecting $\Lambda$. Then, to construct an efficient covering of $\Lambda$ from ${\mathcal B}_n$ we proceed as follows. We first recall how $\mathcal{B}_n$ was constructed. For every $n \geq N_0$ we choose a $\|Dg\|^{-n}2\epsilon^n$-covering of the border $\mathcal{B}_n(\partial{N})$ and generate a $2\epsilon^n$-covering of $\partial{P}$, for every $P \in \wp_n$ defining an open neighborhood $W_n(\partial{P})$ of its border. Then we choose a $2\epsilon^n$-covering of the complement $P - W_n(P)$ of this neighborhood of the border: fix a maximal family of non-overlapping open $\epsilon^n$-balls $\{B(z_j,\epsilon^n)\}$ inside the closed set $P - W_n(P)$ so that $\{B(z_j,2\epsilon^n)\}$ is on open covering of $P - W_n(P)$. This choice was completely arbitrary whenever the non-overlapping condition is fullfiled. So let us choose now this collection as follows: let us start, for every $P \in \wp_n$, with a \textit{maximal collection of non-overlapping open $\epsilon^n$-balls $\{B(y_k,\epsilon^n)\}$ with $y_k \in \Lambda \cap P$ contained in $P - W_n(P)$}. This is possible since $\Lambda \cap P \subset P - W_n(P)$, by our border condition and the choice of $n \geq N_0$. Then, we complete with a maximal collection 
$\{B(z_j,\epsilon^n)\}$ of non-overlapping open $\epsilon^n$-balls in $P-W_n(P)$ intersected with the complement of $\bigcup_k(y_k,2\epsilon^n)$. We call
$$
\mathcal{B}_n(\Lambda \cap P) = \{B(y_k,2\epsilon^n)\} \quad\text{and}\quad \mathcal{B}_n(P) = \{B(y_k,2\epsilon^n)\} \cup \{B(z_j,2\epsilon^n)\} \cup \{B(x_i,2\epsilon^n)\},
$$
where $\{B(x_i,2\epsilon^n)\}$ is the covering of the border $\partial{P}$ giving rise to the open neighborhood 
$W_n(\partial{P})$. Then define 
$$
{\mathcal B}_n(\Lambda) = \bigcup_{P \in \wp_n}{\mathcal B}_n(\Lambda \cap P).
$$
This is a better covering than ${\mathcal B}_n$ since we are omitting balls $B(2\epsilon^n)$ of our initial covering $\mathcal{B}_n$ which do not intersect $\Lambda$. Let us introduce
\begin{equation}\label{fraction.efficient.covering}
\rho(P) = \dfrac{\sum_{B \in {\mathcal B}_n(\Lambda \cap P)}\Vol(B)}{\Vol(P)}, 
\end{equation}
the fraction of the volume of $P$ which is covered by the $2\epsilon^n$-balls in ${\mathcal B}_n(\Lambda \cap P)$.

\begin{proposition}\label{proposition.construction.Sigma}
There exists a semicontinuous function
\begin{equation}\label{definition.Sigma(x)}
\Sigma(x) = \limsup_{n \to +\infty}-\dfrac{\log(\rho(\wp_n(x)))}{n},
\end{equation}
and constants $-\infty < \underline{\Sigma} \leq \overline{\Sigma} < +\infty$, with $\overleftarrow{\Sigma} > 0$, such that,
\begin{equation}\label{bounding.Sigma(x)}
\underline{\Sigma} \leq \Sigma(x) \leq \overline{\Sigma}, \quad \forall \ x \in \Lambda.
\end{equation}
Here,
\begin{equation}\label{definition.lower.Sigma}
  \underline{\Sigma} = 
  \log\left((\|Df\|\epsilon)^{-\overline{d_B(\Lambda)}(\Lambda) + d}\text{min}(Df)^{-d}e^{\lambda_1}\right)^{-1}
 \end{equation}
 and
 \begin{equation}\label{definition.upper.Sigma}
  \overline{\Sigma} = \log\left((\|Df\|\epsilon)^{-\overline{d_B(\Lambda)} + d}\|Df\|^{-d}e^{\lambda_0}\right)^{-1}.
 \end{equation} 
 Here $\overline{d_B(\Lambda)}$ the upper box-counting dimension of $\Lambda$ and
 $$
 \text{min}(Df) = \inf_{x \in \bigcup_iN_i}\inf_{\|v\| \leq 1}\|Df(x)v\|, \quad v \in T_xM.
 $$
\end{proposition}

To prove proposition \ref{proposition.construction.Sigma} we need first a couple of lemmas.

\begin{lemma}\label{lemma.choosing.delta_0}
 Let $\|Df\| = \sup_{x \in \bigcup_iN_i}\|Df(x)\|$. There exists $\delta_0 > 0$ such that 
$\|Df\|\epsilon < 1$  for every $0 < \delta < \delta_0$. 
\end{lemma}
\begin{proof}
By definition,
$$
 \epsilon = (1-\delta)\left(\dfrac{\|\underline{Jg}\|}{\quad\quad\|Dg\|^{d - 1 + \delta}}\right)^{\frac{1}{1-\delta}},
 $$
 where
 $$
 \|\underline{Jg}\| = \min_i\inf_{x \in N}Jg_i(x) \quad\text{and}\quad \|Dg\| = \max_i\sup_{x \in N}Dg_i(x).
 $$
 We will use the singular values of $Dg_i(x)$, $0 < s^d_i(x) \leq \cdots \leq s^1_i(x) < 1$ to estimate the product $\|Df\|\epsilon$. Indeed, as
 $$
 Jg_i(x) = s^1_i(x) \cdots s^d_i(x) \quad\quad \|Dg_i(x)\| = s^1_i(x) \quad\quad Df(g_i(x)) = (s^d_i(x))^{-1}
 $$
 we are led to estimate:
 $$
 \Delta(x,\delta) = (1-\delta)\left(\dfrac{s^1_i(x) \cdots s^d_i(x)}{(s^1_i(x))^{d - 1 + \delta}}\right)^{\frac{1}{1-\delta}}(s^d_i(x))^{-1}.
 $$
 This is a continuous function of $x \in N$ and $\delta \geq 0$. Then, provided that at least one singular value $s^k_i(x) < s^1_i(x)$, which is certainly the case because we are dealing with a non conformal map, then:
$$
\Delta(x,0) = \dfrac{s^1_i(x) \cdots s^d_i(x)}{(s^1_i(x))^{d - 1}}(s^d_i(x))^{-1} = \dfrac{s^2(x)}{s^1(x)} \cdots \dfrac{s^{d-1}(x)}{s^1(x)} < 1.
$$
 Thus we can find, by continuity and a compacity argument, $\delta_0 > 0$, a small positive number, such that:
 $$
 \Delta(x,\delta) < 1 \quad\text{for every}\quad 0 < \delta < \delta_0, \quad \forall \ x \in \Lambda.
 $$
 Then, taking supremum and infimum properly, we conclude that $\|Df\|\epsilon < 1$, as we claimed.
\end{proof}
\ 
\\
\\
\textit{Choosing $\delta$}
\ 
\\
\\
\textit{From now on we will fix $\delta$
\begin{equation}\label{fixing.delta}
0 < \delta < \delta_0,
\end{equation}
where $\delta_0$ was chosen in lemma \ref{lemma.choosing.delta_0}, and the respective 
$\epsilon = \epsilon(\delta)$ without further mention}.
\ 
\\
\\
 \begin{definition}\label{definition.min(Df^n)}
 Let $f^n : f^{-n}(N) \to N$. We define,
 \begin{equation}
 \text{min}(Df^n) = \inf_{x \in f^{-n}(N)}\inf_{\|v\| \leq 1}\|Df^n(x)v\|. 
 \end{equation}
 \end{definition}
 
 In particular, $\|Df^n(x)v\| \geq \text{min}(Df^n)\|v\|$, for every $v \in T_xM$ and $x \in f^{-n}N$.
 
 \begin{lemma}\label{lemma.min(Df^n)}
 \begin{equation}\label{min(Df^n).supermultiplicative}
 \text{min}(Df^n) \geq (\text{min}(Df))^n, \quad \forall \ n \geq 1.
 \end{equation}
 Moreover, let $B(x,r) \subset f^{-n}(N)$. Then, 
 \begin{equation}\label{inclusion.iterate.ball}
 B(f^n(x),\text{min}(Df^n)r) \subset f^n(B(x,r)). 
 \end{equation}
 \end{lemma}
 \begin{proof}
 To prove (\ref{min(Df^n).supermultiplicative}) we argue by induction. For $n =1$ the inequality is trivial. Then let $n > 1$ and suppose that (\ref{min(Df^n).supermultiplicative}) holds for every $1 \leq k < n$. Then, let $x \in f^{-n}N$:
 \begin{eqnarray*}
  \inf_{\|v\| \leq 1}\|Df^n(x)v\|  & = & \inf_{\|v\| \leq 1}\|Df^{n-1}(f(x))Df(x)v\| \geq 
 \inf_{\|v\| \leq 1}\text{min}(Df^{n-1})\|Df(x)v\| \\
 & \geq  & \text{min}(Df^{n-1})\text{min}(Df) \geq (\text{min}(Df))^{n-1}\text{min}(Df) = (\text{min}(Df))^n,
 \end{eqnarray*}
 by the inductive hypothesis. Now, notice that $f^{-n}N = g^n(N) = \bigcup_{P \in \wp_n}P$. If $B(x,r) \subset f^{-n}(N)$ then there exists $P \in \wp_n$ such that $B(x,r) \subset P$ and then $f^k(B(x,r) \subset f^k(P) \subset N$ for every $k = 0, \cdots , n$. If $y \in B(x,r)$ let $\gamma = \gamma_{xy}(t)$ be a $C^1$ curve such that $\gamma(0) = x$, $ y = \gamma(1)$ and let 
 $\ell(\gamma)$ be the length of the curve $\gamma$. Then:
 $$
 \ell(f^n(\gamma)) = \int_0^1\|Df^n(\gamma(t))\gamma'(t)\|dt \geq \text{min}(Df^n)\int_0^1\|\gamma'(t)\|dt = \text{min}(Df^n)\ell(\gamma).
 $$
 Therefore,
 $$
 \ell(f^n(\gamma)) \geq \inf_{\gamma = \gamma_{xy}}\text{min}(Df^n)\ell(\gamma) = \text{min}(Df^n)d(x,y).
 $$
 As $f^n : f^{-n}N \to N$ is a diffeomorphism then every curve joining $f^n(x)$ to $f^n(y)$ is of the form $f^n(\gamma_{xy})$ for some $\gamma = \gamma_{xy}$. Thus,
 $$
 d(f^n(x),f^n(y)) \geq \text{min}(Df^n)d(x,y).
 $$
 Hence, $B(f^n(x),\text{min}(Df^n)r) \subset f^n(B(x,r))$, so proving \ref{inclusion.iterate.ball}. 
 \end{proof}
 
\begin{proof}[Proof of proposition \ref{proposition.construction.Sigma}]
 We are looking for upper and lower bounds for 
 $$
 \rho(\wp_n(x)) = \dfrac{\sum_{B \in {\mathcal B}_n(\Lambda \cap \wp_n(x))}\Vol(B)}{\Vol(\wp_n(x))}
 $$
 By lemma \ref{lemma.2}, there exist a constant $C_1 > 1$ such that, for every $x \in \Lambda$:
$$
C_1^{-1}e^{-n\lambda_1} \leq \Vol(\wp_n(x)) \leq C_1e^{-\lambda_0n} \quad\forall \ n > 0,
$$
where, $0 < \lambda_0 < \lambda_1$ are the numbers
$$
\lambda_0 = \log\left(\inf_{x \in \Lambda}Jf(x)\right) \quad\text{and}\quad \lambda_1 = \log\left(\inf_{x \in \Lambda}Jf(x)\right).
$$
 Then,
 \begin{eqnarray*}
 \rho(\wp_n(x)) & \geq & \dfrac{\#{\mathcal B}_n(\Lambda \cap \wp_n(x))A_0(2\epsilon^n)^d}{C_1e^{-\lambda_0n}} \\
 & = & \dfrac{A_0}{C_1}\#{\mathcal B}_n(\Lambda \cap \wp_n(x))(2\epsilon^n)^d
 e^{\lambda_0n},
 \end{eqnarray*}
 where $A_0$ comes from the bound $A_0r^d \leq \Vol(B(x,r)) \leq A_1r^d$. See (\ref{volume.diameter.constants.1}). Now,
 \begin{equation}\label{bound.cardinality.1}
  \#\mathcal{B}_n(\Lambda \cap \wp_n(x))) \geq \mathcal{N}(\Lambda,\|Df\|^n2\epsilon^n).
 \end{equation}
To see this we first notice that $f^n(\Lambda \cap P) = \Lambda$. On the other hand, for every $2\epsilon^n$-ball $B(y_k,2\epsilon^n)$ contained in the reduced covering $\mathcal{B}_n(\Lambda \cap \wp_n(x)))$ we have that $f^n(B(y_k,2\epsilon^n)) \subset B(f^n(y_k),\|Df\|^n2\epsilon^n)$. Thus, $\{B(f^n(y_k),\|Df\|^n2\epsilon^n)\}$ is a finite covering by open $\|Df\|^n2\epsilon^n$-balls with the same cardinality of $\mathcal{B}_n(\Lambda \cap \wp_n(x)))$. This shows (\ref{bound.cardinality.1}). Then,
\begin{eqnarray*}
\rho(\wp_n(x)) & \geq & \dfrac{A_0}{C_1}\#{\mathcal B}_n(\Lambda \cap \wp_n(x))(2\epsilon^n)^de^{\lambda_0n}
 \geq  \dfrac{A_0}{C_1}\mathcal{N}(\Lambda,\|Df\|^n2\epsilon^n)(2\epsilon^n)^de^{\lambda_0n}\\
& = & \dfrac{A_0}{C_1}\mathcal{N}(\Lambda,\|Df\|^n2\epsilon^n)(\|Df\|^n2\epsilon^n)^d\left(\dfrac{(2\epsilon^n)^d}{(\|Df\|^n2\epsilon^n)^d}\right)e^{\lambda_0n}\\
& = & \dfrac{A_0}{C_1}\mathcal{N}(\Lambda,\|Df\|^n2\epsilon^n)(\|Df\|^n2\epsilon^n)^d\left(\dfrac{e^{\lambda_0}}{\|Df\|^d}\right)^n
\end{eqnarray*}
 Thus, for every $n \geq 1$,
\begin{eqnarray*}
\log\rho(\wp_n(x)) & \geq & \log\left(\dfrac{A_0}{C_1}\right) + \log\mathcal{N}(\Lambda,\|Df\|^n2\epsilon^n) + d\log(\|Df\|^n2\epsilon^n) 
   + n\log\left(\dfrac{e^{\lambda_0}}{\|Df\|^d}\right)\\
& = & \log\left(\dfrac{A_0}{C_2}\right) + \log(\|Df\|^n2\epsilon^n)
\left(\dfrac{\log\mathcal{N}(\Lambda,\|Df\|^n2\epsilon^n)}{\log(\|Df\|^n2\epsilon^n)} + d\right) 
  + 
n\log\left(\dfrac{e^{\lambda_0}}{\|Df\|^d}\right)
\end{eqnarray*}
Hence, 
 \begin{eqnarray*}
   \liminf_{n \to +\infty}\dfrac{\log(\rho(\wp_n(x)))}{n} & \geq & 
   (-\overline{d_B(\Lambda)} + d)\log(\|Df\|\epsilon) + \log\left(\dfrac{e^{\lambda_0}}{\|Df\|^d}\right)\\
   & = & \log\left((\|Df\|\epsilon)^{-\overline{d_B(\Lambda)} + d}\left(\dfrac{e^{\lambda_0}}{\|Df\|^d}\right)\right)\\
   & = & \log\left((\|Df\|\epsilon)^{-\overline{d_B(\Lambda)} + d}\|Df\|^{-d}e^{\lambda_0}\right),
 \end{eqnarray*}
 where we use that
 \begin{eqnarray*}
 \liminf_{n \to +\infty}\dfrac{\log\mathcal{N}(\Lambda,\|Df\|^n2\epsilon^n)}{\log(\|Df\|^n2\epsilon^n)} = 
 -\limsup_{n \to +\infty}-\dfrac{\log\mathcal{N}(\Lambda,\|Df\|^n2\epsilon^n)}{\log(\|Df\|^n2\epsilon^n)}
  =  -\overline{d_B(\Lambda)}.
 \end{eqnarray*} 
 where $\overline{d_B(\Lambda)}$ is the upper box counting dimension of $\Lambda$, since $\|Df\|^n2\epsilon^n \to 0$ as $n \to +\infty$, as $\|Df\|\epsilon < 1$ by our choice of $0 < \delta < \delta_0$. Then,
\begin{eqnarray*}
\limsup_{n \to +\infty}-\dfrac{\log(\rho(\wp_n(x)))}{n} & = & -\liminf_{n \to +\infty}\dfrac{\log(\rho(\wp_n(x)))}{n}\\
& \leq & \log\left((\|Df\|\epsilon)^{-\overline{d_B(\Lambda)} + d}\|Df\|^{-d}e^{\lambda_0}\right)^{-1}
\end{eqnarray*}

On the other hand, as ${\mathcal B}_n(\Lambda \cap \wp_n(x)) =  \{B(y_k,2\epsilon^n)\}$, where $\{B(y_k,\epsilon^n)\}$ is a maximal family of non-overlapping open 
$\epsilon^n$-balls with $y_k \in \Lambda \cap \wp_n(x)$, then $\{B(y_k,\epsilon^n/2)\}$ are pairwise disjoint and does not cover $\Lambda \cap \wp_n(x)$. Now, 
$$
B(f^n(y_k), \text{min}(Df^n)(\epsilon^n/2)) \subset f^n(B(y_k,\epsilon^n/2))
$$
by (\ref{inclusion.iterate.ball}) in Lemma \ref{lemma.min(Df^n)} and then the set $\{B(f^n(y_k), \text{min}(Df^n)\epsilon^n/2)\}$ is a collection of disjoint balls centered in points of $\Lambda$ with cardinality $\#{\mathcal B}_n(\Lambda \cap \wp_n(x))$, which can be completed to a minimal covering of $\Lambda$ by $\text{min}(Df^n)(\epsilon^n/2)$-balls. Consequently,
 $$
 \#{\mathcal B}_n(\Lambda \cap \wp_n(x)) \leq \mathcal{N}(\Lambda, \text{min}(Df^n)(\epsilon^n/2)).
 $$
 Therefore, as $C_1^{-1}e^{-\lambda_1n} \leq \Vol(\wp_n(x))$
 \begin{eqnarray*}
 \rho(\wp_n(x))  & \leq & A_1C_1\#{\mathcal B}_n(\Lambda \cap \wp_n(x))(2\epsilon^n)^de^{n\lambda_1}\\
 & \leq & A_1C_1\mathcal{N}(\Lambda, \text{min}(Df^n)\epsilon^n/2)(2\epsilon^n)^de^{n\lambda_1}\\
 & = & A_1C_1\mathcal{N}(\Lambda, \text{min}(Df^n)\epsilon^n/2)(\text{min}(Df^n)\epsilon^n/2)^d
 \dfrac{(2\epsilon^n)^d}{(\text{min}(Df^n)\epsilon^n/2)^d}e^{n\lambda_1}\\
 & \leq & 4^dA_1C_1\mathcal{N}(\Lambda, \text{min}(Df^n)\epsilon^n/2)(\text{min}(Df^n)\epsilon^n/2)^d\left(\dfrac{e^{\lambda_1}}{\text{min}(Df)^d}\right)^n,
 \end{eqnarray*}
 using that $\text{min}(Df^n) \geq (\text{min}(Df))^n$. Hence, for every $n \geq 1$,
 \begin{eqnarray*}
\log(\rho(\wp_n(x)))  & \leq & \log(4^dA_1C_2) + \log(\mathcal{N}(\Lambda, \text{min}(Df^n)\epsilon^n/2)) + d\log(\text{min}(Df^n)\epsilon^n/2) \\
& & - dn\log(\text{min}(Df)) + n\lambda_1\\
& = & \log(2^dA_1C_2)  + \log(\text{min}(Df^n)\epsilon^n/2)
\left(\dfrac{\log(\mathcal{N}(\Lambda, \text{min}(Df^n)\epsilon^n/2))}{\log(\text{min}(Df^n)\epsilon^n/2)} + d\right) \\
& & - nd\log(\text{min}(Df)) +  n\lambda_1\\
& \leq & \log(2^dA_1C_2)  + \log(\|Df\|^n\epsilon^n/2)
\left(\dfrac{\log(\mathcal{N}(\Lambda, \text{min}(Df^n)\epsilon^n/2))}{\log(\text{min}(Df^n)\epsilon^n/2)} + d\right) \\ 
& & - nd\log(\text{min}(Df)) +  n\lambda_1,
 \end{eqnarray*}
 since $\text{min}(Df^n) \leq \|Df^n\| \leq \|Df\|^n$ and then 
 \begin{eqnarray*}
 \liminf_{n \to +\infty}\dfrac{\log(\rho(\wp_n(x)))}{n} & \leq & 
 (-\overline{d_B(\Lambda)} + d)\log(\|Df\|\epsilon) - d\log(\text{min}(Df)) + \lambda_1\\
 & = & \log\left((\|Df\|\epsilon)^{-\overline{d_B(\Lambda)} + d}\text{min}(Df)^{-d}e^{\lambda_1}\right),
 \end{eqnarray*}
 where we use again that
 $$
 \liminf_{n \to +\infty}\dfrac{\log(\mathcal{N}(\Lambda, \text{min}(Df^n)\epsilon^n/2))}{\log(\text{min}(Df^n)\epsilon^n/2)} = -\overline{d_B(\Lambda)},
 $$
 as we proved before, since $\|Df\|\epsilon < 1$, by our choice of $0 < \delta < \delta_0$ and then
 $$
 \text{min}(Df^n)\epsilon^n \leq \|Df^n\|\epsilon^n \leq (\|Df\|\epsilon)^n \to 0 \quad\text{as}\quad n \to 
 +\infty.
 $$
 Hence,
 \begin{eqnarray*}
\limsup_{n \to +\infty}-\dfrac{\log(\rho(\wp_n(x)))}{n} & = & -\liminf_{n \to +\infty}\dfrac{\log(\rho(\wp_n(x)))}{n}\\
& \geq & \log\left((\|Df\|\epsilon)^{-\overline{d_B(\Lambda)} + d}\text{min}(Df)^{-d}e^{\lambda_1}\right)^{-1}
\end{eqnarray*}  
This completes the proof of proposition \ref{proposition.construction.Sigma}.
\end{proof}

\begin{lemma}\label{lemma.third.fundamental.inequality}
Let $\alpha$ be the solution to the Bowen equation (\ref{bowen.equation}). There exist constants $C_6(\beta) > 0$ and $C_7(\beta) > 0$, independent of $n > 0$, such that, for every $\beta > 0$ ans for every small $\gamma > 0$, there exists a large $N_2 = N_2(\gamma) > 0$, such that
\begin{equation}\label{third.fundamental.inequality}
C_6(\beta)[e^{-\lambda_1(1-\alpha)}e^{-(\overline{\Sigma} + \gamma)}\epsilon^{d(\beta-1)}]^n \leq \sum\limits_{B \in {\mathcal B}_n(\Lambda)}\Vol(B)^{\beta} \quad\leq\quad C_7(\beta)[e^{-\lambda_0(1 - \alpha)}
e^{-\underline{\Sigma}}\epsilon^{d(\beta-1)}]^n,
\end{equation}
for every $n \geq N_2(\gamma)$, where  $\overline\Sigma$ and $\underline\Sigma$ are the constants found in proposition \ref{proposition.construction.Sigma}. 
\end{lemma}
\begin{proof}
 Let ${\mathcal B}_n(\Lambda \cap P)$ be the covering of $\Lambda \cap P$ defined in proposition \ref{proposition.construction.Sigma}. By definition,
$$
\sum\limits_{B \in {\mathcal B}_n(\Lambda \cap P)}\Vol(B) = \rho(P)\Vol(P).
$$
Then,
\begin{eqnarray*}
\rho(P)\Vol(P) & = & \sum\limits_{B \in {\mathcal B}_n(\Lambda \cap P)}\Vol(B) 
 =  \sum\limits_{B \in {\mathcal B}_n(\Lambda \cap P)}\Vol(B)^{\beta}\Vol(B)^{1-\beta}\\
& \leq & (A_1(2\epsilon^n)^d)^{1-\beta}\sum\limits_{B \in {\mathcal B}_n(\Lambda \cap P)}\Vol(B)^{\beta}\\
& =  & [A_12^d]^{(1-\beta)}\epsilon^{nd(1-\beta)}\sum\limits_{B \in {\mathcal B}_n(\Lambda \cap P)}\Vol(B)^{\beta},
\end{eqnarray*}
multiplying and dividing each term of the sum by $\Vol(B)^{\beta}$ and using (\ref{volume.diameter.constants.2}). Therefore,
$$
\sum_{P \in \wp_n}\rho(P)\Vol(P) \leq [A_12^d]^{(1-\beta)}\epsilon^{nd(1-\beta)}\sum\limits_{B \in {\mathcal B}_n(\Lambda)}\Vol(B)^{\beta}.
$$
By the definition of $\Sigma(x)$ (\ref{definition.Sigma(x)}) as an upper rate of decaying and that 
$\underline{\Sigma} \leq \Sigma(x) \leq \overline{\Sigma}$ for every $x \in \Lambda$, we have that for every small $\gamma > 0$ there exists $N_2 = N_2(\gamma) > 0$ such that,
\begin{equation}\label{defining.N_2(gamma)}
\underline{\Sigma} \leq -\dfrac{\log\rho(P)}{n} \leq \overline{\Sigma} + \gamma, \quad\forall \ n \geq N_2(\gamma), \ \forall \ P \in \wp_n,
\end{equation}
that is:
\begin{equation}\label{bounds.rho(P)}
e^{-(\overline{\Sigma} + \gamma)n} \leq \rho(P) \leq e^{-\underline{\Sigma}n}, \quad\forall \ n \geq N_2(\gamma), \ \forall \ P \in \wp_n.
\end{equation}
Then,
$$
e^{-(\overline{\Sigma} + \gamma)n}\sum_{P \in \wp_n}\Vol(P) \leq [A_12^d]^{(1-\beta)}\epsilon^{nd(1-\beta)}\sum\limits_{B \in {\mathcal B}_n(\Lambda)}\Vol(B)^{\beta}, \quad\forall \ n \geq N_2(\gamma).
$$
Now, multiplying and dividing each term of the sum $\sum_{P \in \wp_n}\Vol(P)$ by the factor 
$\Vol(P))^{\alpha}$ and using the volume bounds (\ref{bounds.2}) we get
\begin{eqnarray*}
\sum\limits_{P \in \wp_n}\Vol(P) & = & \sum\limits_{P \in \wp_n}\Vol(P)^{\alpha}\Vol(P)^{1-\alpha}\\
& \geq & [e^{-\lambda_1n(1-\alpha)}]\sum\limits_{P \in \wp_n}\Vol(P)^{\alpha}.
\end{eqnarray*}
Combining these inequalities, we get the lower bound in (\ref{third.fundamental.inequality}):
\begin{eqnarray*}
\sum\limits_{B \in {\mathcal B}_n(\Lambda)}\Vol(B)^{\beta} & \geq & [A_12^d]^{(\beta-1)}\epsilon^{nd(\beta-1)}\sum\limits_{P \in \wp_n}\rho(P)\Vol(P)\\
                                                           & \geq & 
[A_12^d]^{(\beta-1)}\epsilon^{nd(\beta-1)}e^{-\lambda_1n(1 - \alpha)}e^{-(\overline{\Sigma} + \gamma)n}\sum\limits_{P \in \wp_n}\Vol(P)^{\alpha}\\
                                                           & = & 
C_6(\beta)[e^{-\lambda_1(1-\alpha)}e^{-(\overline{\Sigma} + \gamma)}\epsilon^{d(\beta-1)}]^n,
\end{eqnarray*}
for every $n \geq N_2(\gamma)$, where
$$
C_6(\beta) = [A_12^d]^{\beta-1}\inf_{n > 0}\sum\limits_{P \in \wp_n}\Vol(P)^{\alpha}.
$$
Similarly,
$$
[A_02^d]^{(1-\beta)}\epsilon^{nd(1-\beta)}\sum\limits_{B \in {\mathcal B}_n(\Lambda \cap P)}\Vol(B)^{\beta} \quad\leq\quad  \rho(P)\Vol(P)
$$
and then
\begin{eqnarray*}
[A_02^d]^{(1-\beta)}\epsilon^{nd(1-\beta)}\sum\limits_{B \in {\mathcal B}_n(\Lambda)}\Vol(B)^{\beta} 
& \leq &  \sum_{P \in \wp_n}\rho(P)\Vol(P) \\
& \leq & [e^{-n\lambda_0(1-\alpha)}]e^{-n\underline{\Sigma}}\sum\limits_{P \in \wp_n}\Vol(P)^{\alpha},
\end{eqnarray*}
so that, for every $n \geq N_2(\gamma)$,
\begin{eqnarray*}
\sum\limits_{B \in {\mathcal B}_n(\Lambda)}\Vol(B)^{\beta} & \leq & [A_02^d]^{(\beta-1)}[e^{-\lambda_0(1-\alpha)}\epsilon^{d(\beta-1)}e^{-\underline{\Sigma}}]^n\sum\limits_{P \in \wp_n}\Vol(P))^{\alpha}\\
& \leq & C_7(\beta)[e^{-\lambda_0(1-\alpha)}\epsilon^{d(\beta-1)}e^{-\underline{\Sigma}}]^n,
\end{eqnarray*}
which is the upper bound in (\ref{third.fundamental.inequality}), being
$$
C_7(\beta) = [A_02^d]^{\beta-1}\sup_{n > 0}\sum\limits_{P \in \wp_n}\Vol(P)^{\alpha} > 0.
$$
\end{proof}

\begin{remark}\label{remark.upper.Hausdorff}
It can be proved that, if $s = \overline{\beta}^*$ is the unique solution to the equation
 \begin{equation}\label{Hausdorff.beta.upper.bound}
  e^{-\lambda_0(1- \alpha)}e^{-\underline{\Sigma}}\epsilon^{d({\overline{\beta}^*}^*-1)} = 1
 \end{equation}
 where $\alpha$ is the solution to the Bowen equation (\ref{bowen.equation}), then, 
 $$
 \dH(\Lambda) \leq d\overline{\beta}^*
 $$ 
For this we let $\zeta > 0$ be a small positive number. If $n > \max\{\log(\zeta/2)/\log\epsilon,N_2\}$ then 
$\mathcal{B}_n(\Lambda)$ will be a $\zeta$-covering and then, by (\ref{third.fundamental.inequality}),
\begin{eqnarray*}
 \mathcal{H}_{d\beta,\zeta}(\Lambda)  & \leq & \sum\limits_{B \in {\mathcal B}_n(\Lambda)}\diam(B)^{d\beta}\\
 & = & \sum\limits_{B \in {\mathcal B}_n(\Lambda)}(4\epsilon^n)^{d\beta}\\
 & \leq & A_0^{-\beta}2^{d\beta}\sum\limits_{B \in {\mathcal B}_n(\Lambda)}\Vol(B)^{\beta}\\
 & \leq & A_0^{-\beta}2^{d\beta}C_7(\beta)[e^{-\lambda_0(1-\alpha)}e^{-\underline{\Sigma}}\epsilon^{d(\beta-1)}]^n
\end{eqnarray*}
If $\beta = \overline{\beta}^*$ satisfies the equation (\ref{Hausdorff.beta.upper.bound}) then 
$$
\mathcal{H}_{d\overline{\beta}^*,\zeta}(\Lambda) \leq A_0^{-\beta}2^{d\beta}C_7(\beta) < +\infty
$$
As $\zeta > 0$ is arbitrary and the upper bound does not depends on $\zeta$ we conclude that $\mathcal{H}_{d\overline{\beta}^*}(\Lambda) < +\infty$. Then, by standard arguments of Hausdorff measure 
$\mathcal{H}_{d\beta}(\Lambda) = 0$ for every $\beta > \overline{\beta}^*$. Therefore, $\dH(\Lambda) \leq d\overline{\beta}^*$, as we claimed. Unfortunately, this is not that good an estimate since,
$$
d\overline{\beta}^* = d + \dfrac{\underline{\Sigma} + \lambda_0(1-\alpha)}{\log\epsilon} > d + \dfrac{\lambda_0(1-\alpha)}{\log\epsilon} = U,
$$
which is the upper box-counting dimension found in lemma \ref{lemma.d_B.upper bound} in section \ref{section.dB.estimations}. This occurs since, generally speaking, $\underline{\Sigma} < 0$. In our calculations with non conformal affine models we found that the upper bound $d\overline{\beta}^*$ is rather large, compared with the Hausdorff dimension.
\end{remark}

Now we use lemma \ref{lemma.second.fundamental.inequality} and our previous results to get a lower bound for the sums $\sum_{B \in {\mathcal B}_0}\Vol(B)^{\beta}$, for a $\zeta$-covering ${\mathcal B}_0$ with suffciently small $\zeta$. 

\begin{lemma}\label{lemma.fourth.fundamental.inequality}
 Let $\beta > 0$ be a positive number and $\gamma > 0$ small. Then, there exists a constant $C_8(\beta) > 0$ and 
 $\zeta_0  > 0$, a small positive number, such that for every $0 < \zeta < \zeta_0$, for every finite $\zeta$-covering  ${\mathcal B}_0$ of $\Lambda$ and for every $n > N_1(\mathcal{B}_0)$ it holds
 \begin{equation}\label{fourth.fundamental.inequality}
\sum\limits_{B \in {\mathcal B}_0}\Vol(B)^{\beta} \geq C_8(\beta)[e^{-n\lambda_1(1-\alpha)}e^{-n(\overline{\Sigma} + \gamma)}\zeta^{d(\beta-1)}].
\end{equation}
The constant $C_8(\beta)$ is uniform, only depends on the geometry of the ambient space.
\end{lemma}

\begin{proof}
To prove the inequality (\ref{fourth.fundamental.inequality}) we will combine the estimation in  (\ref{second.fundamental.inequality}) with (\ref{third.fundamental.inequality}). In (\ref{second.fundamental.inequality}) we bound the $\beta$-sums over an arbitrary $\zeta$-covering 
$\mathcal{B}_0$ by $\beta$-sums over $2\epsilon^n$-covering $\mathcal{B}_n$, up to a multiplicative function 
$[\epsilon^{nd(\beta-1)}\zeta^{d(1-\beta)}]$, with the condition that $n \geq N_1(\mathcal{B}_0)$. On the other hand, for inequalities in lemma \ref{lemma.third.fundamental.inequality} one fix a small $\gamma > 0$ and choose $N_2(\gamma)$ such that (\ref{third.fundamental.inequality}) holds true one for every $n > N_2(\gamma)$. According to the definition (\ref{defining.N_1}) of $N_1(\mathcal{B}_0)$ we can choose $\zeta$ sufficiently small such that
$$
n > N_1(\mathcal{B}_0) = \dfrac{\log\lambda({\mathcal B}_0)}{\log\epsilon} > N_2(\gamma),
$$
for every $\zeta$-covering ${\mathcal B}_0$ with $0 < \zeta < \zeta_0$. As $\lambda({\mathcal B}_0) < \zeta$ and
$$
\dfrac{\log\lambda({\mathcal B}_0)}{\log\epsilon} > \dfrac{\log\zeta}{\log\epsilon}
$$
it is suffcient to choose $0 < \zeta_0 < 1$ such that:
$$
\dfrac{\log\zeta}{\log\epsilon} > N_2(\gamma) \quad\text{for every}\quad 0 < \zeta < \zeta_0,
$$
that is,
\begin{equation}\label{defining.zeta_0}
\zeta_0 := \exp\left(\dfrac{N_2(\gamma)\log\epsilon}{2}\right).
\end{equation}
Then, if $0 < \zeta < \zeta_0$ and $n > N_1(\mathcal{B}_0)$ we get that
\begin{eqnarray*}
\sum\limits_{B^0_k \in {\mathcal B}_0}\Vol(B^0_k)^{\beta} & \geq & C_5(\beta)[\epsilon^{nd(1-\beta)}\zeta^{d(\beta - 1)}]\sum\limits_{B \in {\mathcal B}_n}\Vol(B)^{\beta}\\
& \geq & C_5(\beta)[\epsilon^{nd(1-\beta)}\zeta^{d(\beta - 1)}]\sum\limits_{B \in {\mathcal B}_n(\Lambda)}\Vol(B)^{\beta}
\end{eqnarray*}
for every $n \geq N_1({\mathcal B}_0)$, by (\ref{second.fundamental.inequality}). Also, by the lower bound in (\ref{third.fundamental.inequality})
$$
\sum\limits_{B \in {\mathcal B}_n(\Lambda)}\Vol(B)^{\beta} \geq C_6(\beta)[e^{-\lambda_1(1-\alpha)}e^{-(\overline{\Sigma} + \gamma)}\epsilon^{d(\beta-1)}]^n,
$$
for a suitable $C_6$ and $n \geq N_2(\gamma)$. Then,
$$
\begin{array}{ccccc}
\sum\limits_{B^0_k \in {\mathcal B}_0}\Vol(B^0_k)^{\beta} 
& \geq & 
C_5(\beta)C_6(\beta)[\epsilon^{nd(1-\beta)}\zeta^{d(\beta - 1)}][e^{-\lambda_1(1-\alpha)}e^{-(\overline{\Sigma} + \gamma)}\epsilon^{d(\beta-1)}]^n\\
& = & 
C_8(\beta)[e^{-n\lambda_1(1-\alpha)}e^{-n(\overline{\Sigma} + \gamma)}\zeta^{d(\beta-1)}], 
\end{array}
$$
for every $n \geq N_1(\mathcal{B}_0)$, where $C_8(\beta) = C_5C_6$.
\end{proof}
\ 
\\
\textbf{Proof of Theorem \ref{main.thm.1}: conclusion}
\ 
\\
\\
We choose $0 < \zeta < \zeta_0$ and a $\zeta$-covering $\mathcal{B}_0$ of $\Lambda$. We also fix a small $\gamma > 0$. Recall that we fixed a $0 < \delta < \delta_0$ in remark \ref{fixing.delta}. Now, let $\underline\beta^*$ the unique solution to the equation:
\begin{equation}\label{defining.beta^*}
e^{-\lambda_1(1- \alpha)}e^{-(\overline{\Sigma} + \gamma)}\epsilon^{d(\underline\beta^* -1)} = e^{\lambda_1(\alpha^*-1)}\epsilon^{d(\underline\beta^*-1)} = 1,
\end{equation}
where 
$$
\alpha^* = \alpha - \dfrac{\overline{\Sigma} + \gamma}{\lambda_1}. 
$$
Then,
$$
\sum\limits_{B \in {\mathcal B}_0}\Vol(B)^{\underline\beta^*} \geq C_8(\beta)[e^{n\lambda_1(\alpha^*-1)}\zeta^{d(\beta^*-1)}].
$$
As $\mathcal{H}_{d\underline\beta^*,\zeta}(\Lambda)$ is a non decreasing function of 
$\zeta$ as it goes to zero we may, without loss of generality, choose $0 < \zeta < \zeta_0$ such that $0 < \mathcal{H}_{d\underline\beta^*,\zeta}(\Lambda) < +\infty$ for, otherwise, $\mathcal{H}_{d\underline\beta^*}(\Lambda) = +\infty$ so proving that $d\underline\beta^* \leq \dH(\Lambda)$, which concludes the proof. Let us fix such $0 < \zeta < \zeta_0$ and $\mathcal{B}_0$ a $\zeta$-covering of $\Lambda$ and define
$$
n = \left[\dfrac{\log(\lambda(\mathcal{B}_0)/2)}{\log\epsilon}\right] + 1,
$$
where $[x]$ is the integer part of $x$ and $\lambda = \lambda(\mathcal{B}_0)$ is a Lebesgue number for 
$\mathcal{B}_0$. By our choice of $\zeta_0$ in (\ref{defining.zeta_0}) and the definition of $N_1$ (\ref{defining.N_1}), 
$$
\dfrac{\log(\lambda(\mathcal{B}_0)/2)}{\log\epsilon} > N_1(\mathcal{B}_0) > N_2(\gamma).
$$
In particular, $n > N_1(\mathcal{B}_0) > N_2(\gamma)$. Moreover, as $[x]$ is the integer floor of $x$ we have
$$
n \leq \dfrac{\log(\lambda(\mathcal{B}_0)/2)}{\log\epsilon} + 1 \quad\quad\text{and then}\quad\quad n\lambda_1(\alpha^* - 1) \geq \left(\dfrac{\log(\lambda(\mathcal{B}_0)/2)}{\log\epsilon} + 1\right)
\lambda_1(\alpha^*-1),
$$
since $\lambda_1(\alpha^*-1) < 0$, because 
$$
0 < \alpha \leq 1 \quad\text{and}\quad \dfrac{\overline{\Sigma} + \gamma}{\lambda_1} > 0.
$$
We will see that our choice of $n$ permits to eliminate the dependence on $n$, $\zeta$ and the covering $\mathcal{B}_0$ in (\ref{fourth.fundamental.inequality}). Indeed, 
\begin{eqnarray*}
\sum\limits_{B \in {\mathcal B}_0}\Vol(B)^{\underline\beta^*} & \geq & C_8(\underline\beta^*)\exp\left[n\lambda_1(\alpha^*-1) + d(\underline\beta^*-1)\log\zeta\right]\\
 & \geq & C_8(\underline\beta^*)\exp\left[\left(\dfrac{\log(\lambda/2)}{\log\epsilon} + 1\right)\lambda_1(\alpha^*-1) + d(\underline\beta^*-1)\log\zeta\right]
   \\
& = & C_8(\underline\beta^*)\exp\left[\lambda_1(\alpha^*-1) + \left(\dfrac{\log(\lambda/2)}{\log\epsilon}\right)\lambda_1(\alpha^*-1) + d(\underline\beta^*-1)\log\zeta]\right]\\
& = & C_8(\underline\beta^*)\exp\left[\lambda_1(\alpha^*-1) + \dfrac{\log\zeta}{\log\epsilon}\left(\dfrac{\log(\lambda/2)}{\log\zeta}\lambda_1(\alpha^*-1) + d(\underline\beta^*-1)\log\epsilon\right)\right].
\end{eqnarray*}
Now, notice that
\begin{eqnarray*}
\dfrac{\log(\lambda/2)}{\log\zeta}\lambda_1(\alpha^*-1) + d(\underline\beta^*-1)\log\epsilon & = &
\left(\dfrac{\log(\lambda/2)}{\log\zeta} -1\right)\lambda_1(\alpha^*-1) 
 +  [\lambda_1(\alpha^* - 1) + d(\underline\beta^*-1)\log\epsilon]\\                                                                                                & = &
\left(\dfrac{\log(\lambda/2)}{\log\zeta} -1\right)\lambda_1(\alpha^* - 1),
\end{eqnarray*}
adding and substracting terms and recalling that $\lambda_1(\underline\alpha-1) + d(\underline\beta^*-1\log\epsilon = 0$, by (\ref{defining.beta^*}). Then,
\begin{eqnarray*}
 \sum\limits_{B \in {\mathcal B}_0}\Vol(B)^{\underline\beta^*} & \geq & 
 C_8(\underline\beta^*)\exp\left[\lambda_1(\alpha^*-1) + \dfrac{\log\zeta}{\log\epsilon}\left(\dfrac{\log(\lambda/2)}{\log\zeta} -1\right)\lambda_1(\alpha^* - 1)\right]\\
& = & C_8(\underline\beta^*)\exp\left[\lambda_1(\alpha^* - 1) + \dfrac{\log(\lambda/2\zeta)}{\log\epsilon}\lambda_1(\alpha^* - 1)\right].
\end{eqnarray*}
Now, as $\log(\lambda/2\zeta) < \log(1/2)$ since $\lambda(\mathcal{B}_0) < \zeta$, we have
$$
\dfrac{\log(\lambda/2\zeta)}{\log\epsilon}\lambda_1(\alpha^* - 1) = -\dfrac{\log(\lambda/2\zeta)}{\log\epsilon}\lambda_1(1-\alpha^*) > \dfrac{\log{2}}{\log\epsilon}\lambda_1(1-\alpha^*).
$$
Therefore,
\begin{eqnarray*}
 \sum\limits_{B \in {\mathcal B}_0}\Vol(B)^{\underline\beta^*} 
& > & C_8(\underline\beta^*)\exp\left[\lambda_1(\alpha^* - 1) + \dfrac{\log{2}}{\log\epsilon}\lambda_1(1 - \alpha^*)\right]\\
& = & C_8(\underline\beta^*)\exp\left[-\lambda_1(1 - \alpha^*) + \dfrac{\log{2}}{\log\epsilon}\lambda_1(1-\alpha^*)\right]\\
& = & C_8(\underline\beta^*)\exp\left[\dfrac{\log(2/\epsilon)}{\log\epsilon}\lambda_1(1-\alpha^*)\right] = C_9(\beta^*),
\end{eqnarray*}
where $C_9(\beta^*)$ does not depends on the covering $\mathcal{B}_0$ neither $\zeta$. Thus,
$$
\sum\limits_{B \in {\mathcal B}_0}\Vol(B)^{\underline\beta^*} \geq C_9({\underline\beta}^*) > 0
\quad\text{and then}\quad
\sum\limits_{B \in {\mathcal B}_0}\diam(B)^{d\underline\beta^*} \geq C_{10}(\underline\beta^*) > 0,
$$
for every finite $\zeta$-covering ${\mathcal B}_0$ with $0 < \zeta < \zeta_0$, for a suitable constant,
$$
C_{10}(\underline\beta^*) = \dfrac{2^{d{\underline\beta^*}}C_9({\underline\beta}^*)}{A_1^{\underline\beta^*}} > 0,
\quad\text{using}\quad
\Vol(B^0_k)^{\underline\beta^*} \leq A_1^{\underline\beta^*}\left(\dfrac{\diam(B^0_k)}{2}\right)^{d\underline\beta^*}
\quad\text{by (\ref{volume.diameter.constants.1}).}
$$
The constant $C_{10}(\underline\beta^*)$ only depends on the geometry of the ambient manifold, the rates of volume expansion of $f$ and the rates of contraction of its inverse branches and several other fixed topological data of the construction. As $C_{10}(\underline\beta^*)$ is independent of the $\zeta$-covering $\mathcal{B}_0$ we conclude that
$$
0 < C_{10}(\underline\beta^*) \leq {\mathcal H}_{d\underline\beta^*,\zeta}(\Lambda) < +\infty,
$$
by our choice of $\zeta$. Therefore, 
$$
0 < C_{10}(\underline\beta^*) < {\mathcal H}_{d\underline\beta^*}(\Lambda) \leq +\infty
$$
since ${\mathcal H}_{d\underline\beta^*,\zeta}(\Lambda)$ is non-decreasing in $\zeta > 0$ and the lower bound $C_{10}$ does not depend on $\zeta$. Thus, by an standard argument for the Hausdorff measure, we conclude that
$$
{\mathcal H}_{d\beta}(\Lambda) =  +\infty \quad\text{for every}\quad \beta < \underline\beta^*.
$$
Therefore,  $d\underline\beta^* \leq \dH(\Lambda)$. Hence, \textit{for every small $\gamma > 0$} and $0 < \delta < \delta_0$:
$$
d\underline\beta^* = d + \dfrac{\lambda_1(1-\alpha^*)}{\log\epsilon(\delta)} = d + \dfrac{\overline{\Sigma} + \gamma + \lambda_1(1-\alpha)}{\log\epsilon(\delta)} \leq \dH(\Lambda).
$$
As these are continuous functions of $\gamma$ and $\delta$, passing to the limit as $\gamma$ and $\delta$ goes to zero, we get
$$
L = d + \dfrac{\overline{\Sigma} + \lambda_1(1-\alpha)}{\log\epsilon} \leq \dH(\Lambda),
$$
where $\epsilon = \epsilon(0)$ defined in the statement of Theorem \ref{main.thm.1}. This proves the lower bound in (\ref{main.inequality}), concluding the proof of Theorem \ref{main.thm.1}.

\section{Appendix A} 
\subsection{Box-counting and Hausdorff dimension}\label{subsection.dimension.theory}

Let $(X,d)$ a complete metric space and $Z \subset X$ a subset. We say that a countable collection of open sets $\mathcal{U} = \{U_i\}$ of $Z$ is a \textit{$\delta$-covering} if $Z \subset \bigcup_iU_i$ and $\diam(U_i) < \delta$ for every $i$. We define an outer measure:
$$
\mathcal{H}_{a,\delta}(Z) = \inf_{\mathcal{U}}\sum_{i=1}^{+\infty}\diam(U_i)^a
$$
where infimum is taken over all the $\delta$-coverings of $Z$. The outer measure $\mathcal{H}_{a,\delta}(Z)$ is a non decreasing function of $\delta$ so we define the \textit{$a$-Hausdorff measure of $Z$} as 
$$
\mathcal{H}_{a}(Z) = \lim_{\delta \to 0^+}\mathcal{H}_{a,\delta}(Z) = \sup_{\delta > 0}\mathcal{H}_{a,\delta}(Z) 
$$
since $\mathcal{H}_{a,\delta}(Z)$ is non decreasing in $\delta$. This is the well known Carath\'eodory's method. $\mathcal{H}_{a}$ is a Borel regular measure and can be equally defined using coverings by closed or convex sets. Moreover, it holds that, if $\mathcal{H}_{a}(Z) > 0$ the 
$\mathcal{H}_{s}(Z) = +\infty$ for every $s < a$.  Similarly so if $\mathcal{H}_{a}(Z) < +\infty$ then 
$\mathcal{H}_{s}(Z) = 0$ for every $s > a$. See \cite[Theorem 4.7, Chapter 4]{mattila}. This property permits to define the \textit{Hausdorff dimension of a subset $Z \subset X$} as:
$$
 \dH(Z) = \inf\{a > 0: \mathcal{H}_{a}(Z) = 0 \} = \sup\{a > 0 : \mathcal{H}_{a}(Z) = +\infty\}
$$
This is a \textit{dimensional characteristic} of the set. It satisfies the following well-known properties:
\begin{enumerate}
 \item $\dH(Z) \leq \dH(Y)$ if $Z \subseteq Y$;
 \item $\dH(\{point\}) = 0$ and 
 \item $\dH\left(\bigcup_{i=1}^{+\infty}Z_i\right) = \sup_{i > 0}\dH(Z_i)$.
\end{enumerate}
The Hausdorff dimension is invariant under Lipschitz continuous map. See for instance \cite{falconer} and \cite{mattila}. Another important dimensional characteristic of a set is the \textit{box countig dimension or limit capacity}: let $Z \subset X$ be a compact set and define:
$$
\mathcal{N}(X,\rho) = \inf\{\ \#\,\mathcal{U}: \ \mathcal{U} = \{B(x_i,\rho)\}, \ \text{finite covering of $Z$ by $\rho$-balls} \ B(x,\rho) \ \}
$$
Then,
$$
\overline{\dim_B}(X) = \limsup_{\rho \to 0^+} \dfrac{\log\,\mathcal{N}(X,\rho)}{\log(1/\rho)} \quad\text{and}\quad \underline{\dim_B}(X) = \liminf_{\rho \to 0^+} \dfrac{\log\,\mathcal{N}(X,\rho)}{\log(1/\rho)}
$$
are respectively the \textit{upper box-counting dimension} and \textit{lower box-counting dimension} 
of a set $Z$. The \textit{box-counting dimension}
$$
\dim_B(X) = \lim_{\rho \to 0^+} \dfrac{\log\,\mathcal{N}(X,\rho)}{\log(1/\rho)}
$$
is defined if the limit exists. Box-counting dimension can also be defined using the Carath\'eodory method: let $Z \subset X$ and $a > 0$ we define the measure
$$
B_{a,\rho}(Z) = \inf_{\mathcal{U}}\sum_{i}\diam(B(x_i,\rho))^a,
$$
where infimum is taken over the family of \textit{finite or countable} open coverings by $\rho$-balls $\mathcal{U} = \{B(x_i,\delta)\}$. We then define
$$
\overline{B}_a(Z) = \limsup_{\rho \to 0^+}B_{a,\rho}(Z)
$$
and
$$
\underline{B}_a(Z) = \limsup_{\rho \to 0^+}B_{a,\rho}(Z).
$$
It can be proved without difficulty, using that $B_{a,\rho}(Z) \asymp \mathcal{N}(Z,\rho)\rho^a$, up to a constant, that
$$
\overline{\dim_B}(Z) = \inf\{a > 0: \overline{B}_a(Z) = 0 \} =  \sup\{a > 0: \overline{B}_a(Z) = +\infty \}
$$
and
$$
\underline{\dim_B}(Z) = \inf\{a > 0: \underline{B}_a(Z) = 0 \} =  \sup\{a > 0: \underline{B}_a(Z) = +\infty \}
$$
See \cite[Chapter 2, Section 6, page 36]{pesin.dimension.1997}. If 
$\dim_B(\Lambda) = \underline{\dim_B}(Z) = \overline{\dim_B}(Z)$ then
$$
\dim_B(\Lambda) = \inf\{a > 0: B_a(Z) = 0 \} = \sup\{a > 0 : B_a(Z) = +\infty\},
$$
where $B_a(Z) = \lim_{\rho \to 0^+}B_{a,\rho}(Z)$. It follows from the definitions that,
$$
\dH(Z) \leq \underline{\dim_B}(Z) \leq \overline{\dim_B}(Z),
$$
since every covering by $\rho$-balls is a $\rho$-covering and therefore, $\mathcal{H}_{a,\rho}(Z) \leq \underline{B}_{a,\rho}(Z) \leq \overline{B}_{a,\rho}(Z)$ for every $a > 0$ and $\rho > 0$.

\subsection{Thermodynamic formalism and dynamical dimension}\label{subsection.thermodynamics.dimension}

Let $f : \bigcup_iU_i \to U$ be defined a piecewise $C^r$, $r > 1$ differentiable map defined as $f|U_i = g^{-1}_i$, where $U_i = g_i(U)$, $i = 1, \cdots , s$ and $N \subset U$ is the open neighborhood provided by the open condition. Then, 
$$
\Lambda = \bigcap_{n=0}^{+\infty}f^{-n}N
$$
is the locally maximal $f$-invariant subset of $f$ in $\bigcup_iN_i$. $\Lambda$ is the set of point which never escapes from the initial template $\{N_i\}$. This is the simplest model of a non linear hyperbolic repeller topologically conjugated to a full-shift on $s$-symbols: there are constants  $C > 0$ and $\lambda > 1$ such that:
$$
\|Df^n(x)v\| \geq C\lambda^n\|v\| \quad \forall v \in T_xM, \ x \in \Lambda
$$
and there exists a homeomorphism $h : B^+(s) \to \Lambda$ intertwining $f$ and the shift $\sigma$: $f \circ h = h \circ \sigma$, where $\sigma(\omega)(n) = \omega(n+1)$, $\omega\in B^+(s) = \{1, \cdots , s \}^{\enteros^+}$ with the product topology. In contrast with other approaches we do not use directly this symbolic dynamics. 

There exists a deep connection between the dimension theory of dynamical systems and thermodynamic formalism. Let us recall some crucial notions of the theory.

Let $(X,d)$ the a compact metric space and $f : X \to X$ a continuous self map. We say that a subset $E \subset X$ is $(\epsilon,n)$-separated if for every pair of points $x,y \in E$ there exists $0 \leq k < n$ such that $d(f^k(x),f^k(y)) > \epsilon$. The cardinal of a maximal $(\epsilon,n)$-separated subset is essentially the number of dynamically different orbits up to time $n$ with precision $\epsilon > 0$. The \textit{topological entropy}, a quantitative indicator of the complexity of the dynamics, is the rate of growing of dynamically different orbits up to finite arbitrarily small precision. The \textit{topological pressure} of a continuous potential $\phi$ is a generalization of the topological entropy and can be defined as a the rate of growing of \textit{weighted} dynamically different orbits up to finite time and small precision,
$$
P(\phi) = \lim_{\epsilon \to 0^+}\limsup_{n \to +\infty}\dfrac{1}{n}\log\left(\sup_E\sum_{x \in E}e^{S_n\phi(x)}\right),
$$
where supremum is taken over the family of $(\epsilon,n)$-separated subsets and $S_n\phi(x) = \sum_{k=0}^{n-1}\phi(f^k(x))$. If $\psi = 0$ we are left with the topological entropy definition $P(0) = h_{top}(f)$. See \cite{bowen.1975} and \cite{walters.1975}. This important quantity is invariant under topological conjugation and can be used as a dynamical indicator of the statistical properties of the system. The important \textit{variational principle por topological pressure} says
$$
P(\phi) = \sup_{\mu \in \mathcal{M}_f}\left\{h(\mu) + \int\phi{d\mu}\right\} 
$$
where $\mathcal{M}_f$ is the set of $f$-invariant Borel probabilites, $h(\mu)$ the \textit{Kolmogorov-Sinai} metric entropy and 
$$
P_{\mu}(\phi) = h(\mu) + \int\phi{d\mu}
$$
is the \textit{free energy}, also called the \textit{measure-theoretical pressure}. We say that $\mu \in \mathcal{M}_f$ is an \textit{equilibrium state} for the potential $\phi$ if it maximizes the free energy, that is:
$$
P(\phi) = h(\mu) + \int\phi{d\mu}. 
$$
An important question is to give sufficient conditions for a potential $\phi$ to have a unique or at most finitely many equilibrium states. These ideas came from the statistical physics on infinite one-dimensional gas whose phase state is modeled on the (bilateral) shift. We refer to \cite{bowen.1975} and \cite{keller} for a comprehensive introduction to the subject.

It is the case that the thermodynamic formalism of $C^r$ ($r > 1$) hyperbolic repellers as the maximal $f$-invariant subsets $\Lambda$ defined by a piecewise expanding $f: \bigcup_iN_i \to N$, which serve as models of the Cantor sets we are studying it is well understood. It can be proved that, for any continuous function $\phi : \Lambda \to \real$, the topological pressure is:
$$
 P(f|\Lambda, \phi) = \limsup_{n \to +\infty}\dfrac{1}{n}\log\left(\sum_{P \in \wp_n}\sup_{x \in P}\exp{S_n\phi(x)}\right) = \limsup_{n \to +\infty}\dfrac{1}{n}\log\left(\sum_{x \in Per_n(\Lambda)}\exp{S_n\phi(x)}\right),
 $$
where $\Lambda_n = \bigcup_{P \in \wp_n}P$ is the $n$-th stage of the construction and $Per_n(\Lambda)$ is the set of periodic points of prime period $n$. If $\phi$ is H\"older continuous, then there exists a unique equilibrium state $\mu_{\phi}$ for $\phi$. This equilibrium state is a \textit{Gibbs measure} in that there exists a constant $G > 1$ such that:
\begin{equation}\label{gibbs.property}
 G^{-1} \leq \dfrac{\mu_{\phi}(\wp_n(x))}{exp(-P(\phi) + S_n\phi(x)} \leq G
\end{equation}
for every $x \in \Lambda$ and for every $n > 0$. See \cite{bowen.1975} and \cite{keller}.

As pointed out in \cite{pesin.dimension.1997}, several pressure-like dynamical indicators constructed in several settings of the termodynamic formalism are dimensional indicators associated to a suitable Carath\'eodory's structure. This remark had originated a number of useful generalizations of the above thermodynamic formalism to include more general classes of potentials, as subadditive and almost subadditive sequences, giving rise to a rich theory for non conformal sets. See for instance \cite{barreira.gelfert.2011},  \cite{barreira.dimension.2011}, \cite{pesin.dimension.1997} and \cite{pesin.review.2010}. Let us show how thermodynamic formalism can be used to get dimensional indicators from the dynamics. Let
$$
\wp = \bigcup_{n \geq 1}\wp_n.
$$
be the generating net for $\Lambda$: for every $x \in \Lambda$ there exists a decreasing sequence $\wp_{n+1}(x) \subset \wp_n(x)$, $n \geq 1$, such that:
$$
\{x\} = \bigcap_{n=1}^{+\infty}\wp_n(x)
$$
where $\wp_n(x) \in \wp_n$ is the atom of generation $n$ containing $x$. We use the generating familiy $\wp$ and the volume as a set function to define a suitable \textit{Carath\'eodory's structure} with a dimensional indicator as follows. We say that a countable family of subsets $\mathcal{U} = \{P_i\}$, with $P_i \in \wp$, is a $(\wp,\delta)$-covering of $X$ if a) $X \subset \bigcup_iP_i$ and $\diam(P_i) < \delta$. Then we define an outer measure:
$$
 \mathcal{D}_{a,\delta}(X) = \inf_{\mathcal{U}}\sum_{i=1}^{+\infty}\Vol(P_i)^a
$$
where infimum is taken over the family of $(\wp,\delta)$-coverings. As usual, 
$\mathcal{D}_{a,\delta}(X)$ is non decreasing in $\delta$ so we define the following \textit{dynamical $a$-measure}:
$$
 \mathcal{D}_{a}(X) = \sup_{\delta > 0}\mathcal{D}_{a,\delta}(X).
$$
This is a Borel regular measure and its dimensional indicator is the \textit{dynamical dimension}
$$
 \dim_{\mathcal{D}}(X) = \inf\{a > 0:  \mathcal{D}_{a}(X) = 0 \} = \sup\{a > 0:  \mathcal{D}_{a}(X) = +\infty\}
$$
It holds out that $\alpha = \dim_{\mathcal{D}}(\Lambda)$ is precisely the solution to the Bowen equation
 \begin{equation}\label{bowen.equation}
  P(f|\Lambda, -\alpha\log(Jf)) = 0,
 \end{equation}
 where $Jf = |\det(Df)|$ is the Jacobian $f$ with respect to the Riemannian volume. Indeed, it is well known that, by the bounded distortion property of the volume under $f$ (see below), there exists a constant $A > 1$, uniform, such that
$$
A^{-1} \leq \Vol(P)Jf^n(x) \leq A \quad\forall \ x \in P, \ P \in \wp_n, \ n \geq 1.
$$
This is also known as the volume lemma. Therefore, $\Vol(P)^a \asymp \sup_{x \in P}\exp{S_n\phi(x)}$, meaning that the ratios of these quantities are uniformly bounded, independent of $P \in \wp_n$ and $n$, by suitable constants. Here $\phi(x) = -a\log{Jf}(x)$ is the \textit{geometric potential}. This is a H\"older continuous function, by our assumptions on the differentiability of the map. By the thermodynamic formalism there exists a unique ergodic Borel probability $\mu_{\alpha}$ supported on $\Lambda$ which is an equilibrium state for the geometric potential with parameter $\alpha$. Using the volume lemma and the Gibbs property we find a constant $B > 1$ such that
$$
B^{-1} \leq \dfrac{\mu(P)}{\Vol(P)^{\alpha}} \leq B \quad\forall \ P \in \wp.
$$
Then, we can prove that $0 < \mathcal{D}_{\alpha}(\Lambda) < +\infty$. In particular, 
$$
\dd(\Lambda) = \dfrac{h(\mu_{\alpha})}{\chi^+(\mu_{\alpha})},
$$
where $\chi^+(\mu) = \int\log{Jf}d\mu$ is the Lyapunov exponent of the Borel probability $\mu$. It holds that, if $f$ is conformal then $Df(x) = a(x)id_{T_xM}$ and $\alpha$ is a solution to the Bowen equation (\ref{bowen.equation}) then $\dH(\Lambda) = \dim_B(\Lambda) = d\alpha$ so, the Hausdorff and box-counting dimension are equal, up to a constant, to the dynamical dimension. The properties of the conformal sets stated in subsection \ref{subsection.conformal.sets} are all a consequence of this fact and the thermodynamics formalism.

\end{document}